\crefname{hypothesis}{Hypothesis}{Hypotheses}
\title{On Difference-of-SOS and Difference-of-Convex-SOS \\Decompositions for Polynomials\thanks{
		Accepted by the SIAM Journal on Optimization January 30, 2024.
		\funding{The first author is supported by the Natural Science Foundation of China (Grant No: 11601327), the Key Construction National ``$985$" Program of China (Grant No: WF220426001), and the Research Grants Council (Grant No: 15304019).}}}
\author{Yi-Shuai Niu\thanks{Beijing Institute of Mathematical Sciences and Applications, Beijing, 101408, China (\email{niuyishuai@bimsa.cn}, \email{niuyishuai82@hotmail.com}).}
	\and
	Hoai An Le Thi \thanks{Universit\'{e} de Lorraine, LGIPM,
		Metz, F-57000, France and
		Institut Universitaire de France (IUF)
		(\email{hoai-an.le-thi@univ-lorraine.fr}).}
	\and
	Dinh Tao Pham
	\thanks{National Institute of Applied Sciences of Rouen, Laboratory of Mathematics, France (\email{pham@insa-rouen.fr}).}
}
\DeclareMathOperator{\R}{\mathbb{R}}
\DeclareMathOperator{\N}{\mathbb{N}}
\DeclareMathOperator{\Z}{\mathbb{Z}}
\DeclareMathOperator{\calA}{\mathcal{A}}
\DeclareMathOperator{\calN}{\mathcal{N}}
\DeclareMathOperator{\calR}{\mathcal{R}}
\newcommand{\IntEnt}[1]{\left[\!\left[#1\right]\!\right]}
\DeclareMathOperator{\SOS}{\mathcal{SOS}}
\DeclareMathOperator{\PSD}{\mathcal{PSD}}
\DeclareMathOperator{\CSOS}{\mathcal{CSOS}}
\DeclareMathOperator{\D-SOS}{\mathcal{D-SOS}}
\DeclareMathOperator{\DC-SOS}{\mathcal{DC-SOS}}
\DeclareMathOperator{\tr}{trace}
\DeclareMathOperator{\Sp}{\text{Sp}}
\newcommand{\Int}[1]{\textrm{int}(#1)}
\newcommand*{\tran}{^{\mkern-1.5mu\mathsf{T}}}
\newcommand{\ie}{i.e.}
\newcommand{\eg}{e.g.}
\newcommand{\POLYDSOS}{\texttt{POLY2DSOS}}
\newcommand{\POLYDCSOS}{\texttt{POLY2DCSOS}}
\newcommand{\density}{\texttt{den}}
\newenvironment{proc}[2]
{\begin{algorithm}[#1]\floatname{algorithm}{#2:}}
	{\end{algorithm}\addtocounter{algorithm}{-1}}
\begin{document}

\maketitle

\begin{abstract}
	In this article, we are interested in developing polynomial decomposition techniques based on sums-of-squares (SOS), namely the difference-of-sums-of-squares (D-SOS) and the difference-of-convex-sums-of-squares (DC-SOS). In particular, the DC-SOS decomposition is very useful for difference-of-convex (DC) programming formulation of polynomial optimization problems. First, we introduce the cone of convex-sums-of-squares (CSOS) polynomials and discuss its relationship to the sums-of-squares (SOS) polynomials, the non-negative polynomials and the SOS-convex polynomials. Then, we propose the set of D-SOS and DC-SOS polynomials, and prove that any polynomial can be formulated as D-SOS and DC-SOS. The problem of finding D-SOS and DC-SOS decompositions can be formulated as a semi-definite program and solved for any desired precision in polynomial time using interior point methods. Some algebraic properties of CSOS, D-SOS and DC-SOS are established. Second, we focus on establishing several practical algorithms for exact D-SOS and DC-SOS polynomial decompositions without solving any SDP. The numerical performance of the proposed D-SOS and DC-SOS decomposition algorithms and their parallel versions, tested on a dataset of $1750$ randomly generated polynomials, is reported.  
\end{abstract}

\begin{keywords}
	Convex-Sums-of-Squares, Difference-of-Sums-of-Squares, Difference-of-Convex-Sums-of-Squares, Polynomial Optimization, DC Programming
\end{keywords}

\begin{AMS}
	12Y05, 65K05, 90C22, 90C25, 90C26, 90C30, 90C90
\end{AMS}

\section{Introduction}\label{sec:intro}
We are interested in establishing polynomial decomposition techniques in forms of difference-of-sums-of-squares (D-SOS) and difference-of-convex-sums-of-squares (DC-SOS). The notation D-SOS is different from the diagonally-dominant-sums-of-squares (DSOS) proposed in  \cite{Paper_Ahmadi2017,ahmadi2019dsos}. In order to avoid potential confusion, we denote D-SOS instead of DSOS. Throughout the paper, the set $\R[x]$ stands for the infinite dimensional vector space of all real-valued polynomials with variable $x\in \R^n$, $n\in \N^*:=\N\setminus\{0\}$ and with coefficients in $\R$. The set $\R_d[x]$ with $d\in \N$ denotes the finite dimensional subspace of $\R[x]$ (with dimension $\binom{n+d}{n}$) for polynomials of degree less than or equal to $d$.

Our initial motivation comes from establishing difference-of-convex (DC) decompositions for polynomials, which is a crucial question in DC programming. Inspired by Hilbert's $17$th problem, there are close relationships between non-negative polynomials and sums-of-squares (SOS) polynomials. A polynomial is not always non-negative (or SOS), but a natural question arises: ``\emph{can we rewrite any polynomial as difference of non-negative (or SOS) polynomials?}" This question motivates us to exploit polynomial DC decomposition techniques involving SOS. 

Consider the polynomial optimization problem defined by
\begin{equation}
	\label{P}
	\min \{f(x): x\in \mathcal{D} \}, \tag{P}
\end{equation}
where $f$ is a finite degree polynomial in $\R[x]$ with $x\in \R^n$, and $\mathcal{D}$ is a nonempty closed convex set of $\R^n$. Problem \eqref{P} is in general nonconvex and has large number of real-world applications such as high-order moment portfolio optimization problems \cite{Paper_Niu2019higher,Paper_Niu2011}, eigenvalue complementarity problems \cite{niu2019improved,Paper_Niu2015,Paper_Niu2013}, Euclidean distance matrix completion problems \cite{bakonyi1995euclidian}, checking copositivity of matrices \cite{Dur2013testing}, Boolean polynomial programs \cite{niu2019discrete}, geometric program \cite{boyd2007tutorial}, sparse PCA \cite{zou2006sparse}, and control of humanoid robots \cite{ahmadi2019dsos}. Problem \eqref{P} can be formulated as a \emph{convex constrained DC program} as
\begin{equation}
	\label{Pdc}
	\min \{f(x) = g(x) - h(x): x\in \mathcal{D} \}, \tag{DC}
\end{equation}
where both $g$ and $h$ are convex polynomials over $\mathcal{D}$. Then, we can solve \eqref{Pdc} using DC programming approaches such as DCA \cite{Paper_Pham1997,Paper_Pham1998,Paper_Pham2005,Paper_Lethi2018} and its variants (e.g., the proximal DCA \cite{souza2016global,wen2018proximal}, the Boosted-DCA  \cite{BDCA_S,Paper_Niu2019higher}, the inertial DCA \cite{de2019inertial}, the ADCA \cite{nhat2018accelerated}, the convex-concave procedure \cite{lipp2016variations,shen2016disciplined}), the linearized proximal methods \cite{souza2016global,pang2017computing}, and the DC bundle methods \cite{joki2017proximal,joki2018double,gaudioso2018minimizing}. 

Although any polynomial function over $\mathcal{D}$ is DC, as any $\mathcal{C}^{2}(\mathcal{D},\R)$ function is DC  \cite{Paper_Hartman1959,Book_Tuy2016}, constructing DC decompositions for general polynomials (especially for high degree multivariate polynomials, with degree larger than 2) is nontrivial. In our knowledge, there are few results in literature on DC decomposition techniques for general polynomials. H. Tuy showed in \cite{Book_Tuy2016} that determining a DC decomposition for a monomial on $\R^n_+$ is easy. Then he proposed to introduce two additional variables $y$ and $z$ in $\R^n_+$ to rewrite $x\in \R^n$ as $y-z$, thus a polynomial in $x$ is rewritten as a polynomial in $(y,z)$, and the latter one is easily expressed in DC. However, this trivial approach doubled the number of variables and is therefore not suitable for polynomials with a large number of variables. A.F. Biosca presented in  \cite{Biosca2001} an interesting DC decomposition to formulate any polynomial as power-sums of linear forms, which is an interesting direction for constructing DC decompositions. However, finding an available set of linear forms with minimal terms to present any given polynomial is not easy and deserves more investigation. Y.S. Niu et al. proposed DC decomposition techniques for polynomials over a compact convex set based on the so-called projective DC decomposition (see e.g., \cite{Paper_Pham1997,hoai2000efficient,Thesis_Niu2010}) in form of $\frac{\rho}{2}\|x\|^2 - (\frac{\rho}{2}\|x\|^2 - f(x))$, where $\rho>0$ is an overestimation of the spectral radius of the Hessian matrix of $f$ over the compact convex set. This technique has been applied in several applications such as the high-order moment portfolio optimization  \cite{Paper_Niu2011} and the eigenvalue complementarity problems  \cite{niu2019improved,Paper_Niu2015,Paper_Niu2013}. However, its drawback is the requirement of a good estimation of $\rho$, which is often difficult to compute. Note that a tighter estimation leads to a better DC decomposition. In \cite{Thesis_Niu2010}, a DC decomposition based on quadratic programming (QP) formulation for polynomials is proposed, then applying DCA for QP formulation requires solving a sequence of convex QPs using efficient QP solvers such as CPLEX \cite{Cplex} and GUROBI \cite{Gurobi}. 

Recently, A.A. Ahmadi et al.  \cite{Paper_Ahmadi2017} investigated DC decompositions for polynomials based on SOS-convexity. They first proved that the convexity of a polynomial is not equivalent to the SOS-convexity  \cite{Paper_Ahmadi2011} and constructed counterexamples in  \cite{Paper_Ahmadi2012}. Then three DC decompositions based on SOS-convexity, namely diagonally-dominant-sums-of-squares-convex, scaled-diagonally-dominant-sums-of-squares-convex and sos-convex decompositions are proposed in \cite{Paper_Ahmadi2017}. Meanwhile, Y.S. Niu et al. proposed in \cite{niu2019improved} a DC decomposition in form of difference-of-convex-sos polynomials for the quadratic eigenvalue complementarity problem, which is equivalent to a particular structured linearly constrained quartic polynomial optimization problem. This DC decomposition yields better numerical results in the quality of the computed solution and the computing time than the DC decompositions in  \cite{Paper_Niu2015,Paper_Niu2013}. A similar idea is also applied to the high-order moment portfolio optimization  \cite{Paper_Niu2019higher} which is a quartic polynomial optimization problem over a standard simplex. 

In this paper, we will generalize the DC decompositions based on SOS used in \cite{niu2019improved} and \cite{Paper_Niu2019higher} to provide some fundamental theorems and practical algorithms for establishing D-SOS and DC-SOS decompositions for general polynomials. Our contributions include:
(i) Establishing convex-sums-of-squares (CSOS), D-SOS and DC-SOS decompositions in \Cref{sec:polydec}. Some algebraic properties and the relationships with SOS, nonnegative polynomials and Convex-SOS polynomials are discussed. We prove that the set of CSOS polynomials is a full-dimensional convex cone, and the set of D-SOS and DC-SOS polynomials are vector spaces and equivalent to $\R[x]$ based on a similar argument as in \cite{Paper_Ahmadi2017}. As an important consequence, any polynomial can be rewritten in forms of D-SOS and DC-SOS, whose decompositions can be computed in polynomial time for any desired precision by solving a semidefinite program (SDP). (ii) Developing several practical algorithms to generate exact D-SOS (\Cref{sec:D-SOS}) and DC-SOS (\Cref{sec:DC-SOS}) decompositions without solving any SDP. (iii) Implementing proposed D-SOS and DC-SOS decomposition algorithms in MATLAB (namely, \texttt{POLY2DSOS} and \texttt{POLY2DCSOS}, codes available at Github\footnote{\url{https://github.com/niuyishuai/POLY2DSOS_DCSOS}}). The parallel codes are also developed for some algorithms based on monomials. Numerical simulations on randomly generated polynomials with different number of variables, polynomial degrees and densities are reported in \Cref{sec:simulations}. 

The significance of studying D-SOS and DC-SOS decompositions is manifold. The DC-SOS decomposition plays a pivotal role by enabling the representation of any polynomial function as a DC function. This, in turn, allows solving the polynomial optimization problems through efficient DC programming algorithms. The D-SOS decomposition, as an extension of the classical polynomial SOS decomposition, can be more easily constructed than the DC-SOS decomposition. Beyond serving as the theoretical foundation for the DC-SOS decomposition, the D-SOS decomposition also paves the way for a novel class of optimization problems - the `D-SOS programming' defined as: 
$$\min_{x\in \mathcal{D}} g(x)-h(x),$$
where both $g$ and $h$ are SOS polynomials, and $\mathcal{D}$ is a non-empty closed convex set in $\R^n$. Despite the difficulty of directly minimizing a D-SOS polynomial, which deserves much attention in our future research, the D-SOS structure empowers us to transform any D-SOS programming problem into a sparse structured DC programming problem as outlined below: Consider that $g(x)=\sum_{i}g_i^2(x)$, $h(x)=\sum_j h_j^2(x)$, where $g_i(x)$ and $h_j(x)$ are some polynomials with $\deg(g_i)\leq \lceil\deg(g)/2\rceil$ and $\deg(h_j)\leq \lceil\deg(h)/2\rceil$. By introducing auxiliary variables $$y_i = g_i(x), \forall i\quad \text{ and } \quad z_j = h_j(x), \forall j,$$ we can represent the D-SOS programming problem as $$\min\{\sum_{i}y_i^2 - \sum_{j}z_j^2: x\in \mathcal{D}, y_i = g_i(x), \forall i, z_j = h_j(x), \forall j\}.$$ Likewise, the D-SOS decomposition of $g_i(x)$ enables the constraint $y_i=g_i(x)$ to be expressed as a DC constraint $y_i = \sum_{k}u_k^2 - \sum_{l} v_l^2$ by adding some equations $u_k = p_k(x)$ and $v_l=q_l(x)$ where $\deg(p_k)\leq \lceil\deg(g_i)/2\rceil$ and $\deg(q_l)\leq \lceil\deg(g_i)/2\rceil$ (analogous to the constraint $z_j=h_j(x)$). We repeat this procedure until the degree of the DC constraint equals $2$. Ultimately, this permits an equivalent representation of the D-SOS programming problem as a quadratic programming problem with a sparse DC objective function $\sum_{i}y_i^2 - \sum_{j}z_j^2$ and accompanied by some sparse quadratic DC constraints of the form $y_i = \sum_{k}u_{k}^2 - \sum_{l} v_{l}^2$. This type of DC constrained DC programs can be tackled using some DC algorithms, see e.g., \cite{le2014dc,lipp2016variations}. This transformation provides a tangible connection between D-SOS decomposition and DC programming formulation. Furthermore, the sparse DC structure can potentially facilitate efficient solutions via DC approaches for some large-scale polynomial optimization problems.

It should be noted that the most attractive advantage of DC-SOS decomposition is the ability to generate good DC decompositions for polynomials (a renowned open question in DC programming \cite{Paper_Lethi2018}), thus reducing the number of iterations in DCA for solving polynomial optimization problems more efficiently. How to solve the convex subproblems required in DCA by exploiting the SOS structure is beyond the scope of this article, which is a very interesting and important research topic in DC programming. Readers may refer to some successful real-world applications using DC-SOS decompositions in quadratic eigenvalue complementarity problems \cite{niu2019improved} and high-order moment portfolio optimization problems \cite{Paper_Niu2019higher}.

\section{Polynomial decompositions based on SOS} \label{sec:polydec}
In this section, we will introduce the CSOS, D-SOS, and DC-SOS polynomials and discuss their algebraic properties, constructibility and complexity.

\subsection{The proper cone of CSOS polynomials}\label{subsec:csos} 

Recall that a polynomial is  \emph{Sums-Of-Squares} (SOS) if it can be rewritten as sums of squares of some polynomials. It is well-known that any SOS polynomial is nonnegative (cf. positive semidefinite - PSD), but a PSD polynomial in $n$ variables and of degree $d$ is SOS if and only if $n = 1$ (univariate polynomials), or $d = 2$ (quadratic
polynomials), or $(n,d) = (2,4)$ (bivariate quartic
polynomials). See more discussions and counterexamples in \cite{Paper_Hilbert1888,Paper_Reznick1996}. Throughout the paper, we will denote $\SOS_{n}$ and $\PSD_{n}$ (resp. $\SOS_{n,2d}$ and $\PSD_{n,2d}$) for SOS and PSD polynomials of $n$ variables (resp. and of degree $2d$). 

\begin{definition}[CSOS polynomial]\label{def:CSOS}
	A polynomial is called \emph{Convex-Sums-Of-Squares} (CSOS) if it is both convex and SOS. The set of CSOS polynomials in $\R[x]$ is denoted by $\CSOS_n$, and the subset of $\CSOS_n$ in $\R_d[x]$ is denoted by $\CSOS_{n,d}$.
\end{definition}

Note that CSOS is different from \emph{SOS-convexity} in  \cite{Paper_Helton2010} defined by:
\begin{definition}[SOS-matrix and SOS-convex polynomial, see \cite{Paper_Helton2010}]\label{def:sos-convex}
	A symmetric polynomial matrix $P(x)\in \R[x]^{m\times m}$ is called an \emph{SOS-matrix} if there exists a polynomial matrix $M(x)\in \R[x]^{s\times m}$ such that
	$$P(x) = M\tran(x)  M(x).\footnote{$M\tran$ stands for the transpose of the matrix $M$.}$$
	A polynomial is called \emph{SOS-convex} if its Hessian matrix is an SOS-matrix.
\end{definition}

SOS-convexity is a sufficient condition for the convexity of a polynomial since its Hessian matrix is positive semidefinite. In general, deciding the convexity of a polynomial appeared in the list of seven open questions in complexity theory for numerical optimization in 1992  \cite{Paper_Pardalos1992}. In recent years, it has been proved in  \cite{Paper_Ahmadi2013} that this problem is strongly NP-hard for polynomials of degree four. However, SOS-convexity can be certified in polynomial time based on the next lemma:
\begin{lemma}(See  \cite{Paper_Kojima2014})\label{lemma:sosmatrix}
	A polynomial matrix $P(x)\in \R[x]^{m\times m}$ is an SOS-matrix if and
	only if $y\tran   P(x)   y$ is an SOS polynomial in $\R[(x,y)]$.
\end{lemma}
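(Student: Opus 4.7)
The plan is to prove the equivalence in two steps. The forward direction ``SOS-matrix $\Rightarrow$ $y\tran\pt P(x)\pt y$ is SOS'' is a direct computation, while the converse requires a $y$-homogeneity argument to extract a certificate of SOS-matrix type from an arbitrary SOS decomposition in $\R[(x,y)]$.

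For the forward direction, I would start from a factorization $P(x) = M\tran(x)\pt M(x)$ with $M(x)\in\R[x]^{s\times m}$ and rows $M_1(x),\ldots,M_s(x)$, then write $y\tran\pt P(x)\pt y = (M(x)\pt y)\tran\pt(M(x)\pt y) = \sum_{i=1}^{s}(M_i(x)\pt y)^2$. Since each $M_i(x)\pt y$ is a polynomial in $\R[(x,y)]$, this exhibits $y\tran\pt P(x)\pt y$ as an SOS polynomial.

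For the converse, start from an SOS certificate $y\tran\pt P(x)\pt y = \sum_{i=1}^{s} q_i(x,y)^2$ and decompose each $q_i$ according to total degree in $y$: $q_i(x,y) = \sum_{k\geq 0} q_i^{(k)}(x,y)$, where $q_i^{(k)}$ is $y$-homogeneous of degree $k$ with coefficients in $\R[x]$. Since the left-hand side is $y$-homogeneous of degree $2$, the key observations are: (i) matching the $y$-degree-$0$ components forces $\sum_i (q_i^{(0)}(x))^2 \equiv 0$, hence each $q_i^{(0)} \equiv 0$, because a sum of squares of polynomials in $\R[x]$ that vanishes identically must have every summand vanish; (ii) with the constant-in-$y$ pieces killed, matching the $y$-degree-$2$ components eliminates the cross terms $q_i^{(0)}\pt q_i^{(2)}$ and yields $y\tran\pt P(x)\pt y = \sum_i (q_i^{(1)}(x,y))^2$. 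Each $q_i^{(1)}$ is linear in $y$, so I would write $q_i^{(1)}(x,y) = M_i(x)\pt y$ with $M_i(x)\in\R[x]^{1\times m}$, stack the $M_i$'s into a matrix $M(x)\in\R[x]^{s\times m}$, and use the symmetry of both $P(x)$ and $M\tran(x)\pt M(x)$ together with the identity $y\tran\pt P(x)\pt y = y\tran\pt M\tran(x)\pt M(x)\pt y$ (holding for all $y$) to conclude $P(x) = M\tran(x)\pt M(x)$.

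The main obstacle is the converse direction, and specifically the fact that an arbitrary SOS certificate of $y\tran\pt P(x)\pt y$ in $\R[(x,y)]$ is not \emph{a priori} built from $y$-linear squares. The $y$-degree bookkeeping outlined above is what resolves this: once the constant-in-$y$ pieces of each $q_i$ are forced to vanish, the $y$-degree-$2$ component of $\sum_i q_i^2$ must coincide with $\sum_i (q_i^{(1)})^2$, and that identity is precisely the structure needed to read off the polynomial matrix $M(x)$ certifying $P(x)$ as an SOS-matrix.
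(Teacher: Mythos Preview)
The paper does not give its own proof of this lemma; it is simply quoted from the cited reference and used as a black box. Your argument is correct and supplies exactly the standard proof: the forward direction is the one-line computation you give, and for the converse your $y$-homogeneity bookkeeping is the right mechanism. Once the $y$-degree-$0$ part of $\sum_i q_i^2$ forces every $q_i^{(0)}$ to vanish, the $y$-degree-$2$ component of the identity reduces to $y\tran\pt P(x)\pt y=\sum_i (q_i^{(1)}(x,y))^2$, and stacking the $y$-linear forms $q_i^{(1)}(x,y)=M_i(x)\pt y$ into $M(x)$ yields $P(x)=M\tran(x)\pt M(x)$ by symmetry. There is nothing to compare against in the paper itself, but nothing is missing from your proposal either.
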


\paragraph{\textbf{Algebraic properties of CSOS polynomials}}
The sets of SOS-convex polynomials and PSD convex polynomials are proper cones as proved in \cite{Paper_Ahmadi2017}. We will now show a similar result in CSOS polynomials. 
\begin{proposition}\label{thm:1-1}
	$\CSOS_{n}$ (resp. $\CSOS_{n,2d}$) is a proper cone in $\R[x]$ (resp. $\R_{2d}[x]$). 
\end{proposition}
\begin{proof}
	The set $\CSOS_{n}$ (resp. $\CSOS_{n,2d}$) is pointed since it is a subset of the pointed set $\SOS_n$ (resp. $\SOS_{n,2d}$). The closedness and convexity are immediate consequences of the recession cone theorem  \cite{Book_Bertsekas2009} since $\CSOS_{n}$ (resp. $\CSOS_{n,2d}$) is a recession cone of the proper cone $\SOS_n$ (resp. $\SOS_{n,2d}$), and any recession cone of nonempty closed convex set is closed and convex. To show that $\CSOS_{n}$ (resp. $\CSOS_{n,2d}$) is full-dimensional in $\R[x]$ (resp. $\R_{2d}[x]$), since the set of SOS-convex polynomials is a proper cone, the nonnegative SOS-convex polynomials is also a proper cone which is a subset of CSOS polynomials  \cite{Paper_Helton2010}. Finally, we conclude that $\CSOS_{n}$ (resp. $\CSOS_{n,2d}$) is a proper cone in $\CSOS_{n}$ (resp. $\CSOS_{n,2d}$). 
\end{proof}

Some properties of CSOS polynomials are summarized as follows whose proofs are trivial and will be omitted.
\begin{proposition}\label{prof:csos}
	The set $\CSOS_n$ (resp. $\CSOS_{n,2d}$) satisfies:
	\begin{enumerate}
		\item[(i)] Let $p\in \CSOS_n$ (resp. $\CSOS_{n,2d}$). Then $|p|:=\sup\{p,-p\}$, $p^+:=\sup\{p,0\}$ and $p^-:=\inf\{p,0\}$ are polynomials in $\CSOS_n$ (resp. $\CSOS_{n,2d}$).
		\item[(ii)] The set $\CSOS_n$ (resp. $\CSOS_{n,2d}$) is not closed for multiplication, subtraction, division, $\sup$ and $\inf$.
	\end{enumerate}
\end{proposition}

Note that \cref{prof:csos} is also valid for PSD convex polynomials, but not for SOS-convex polynomials. Because $|p|$, $p^+$ and $p^-$ may not be polynomials due to the absence of  nonnegativity for SOS-convex polynomials.   

\paragraph{\textbf{Complexity of CSOS certification}}
It is known that deciding the nonnegativity of a polynomial of degree $\geq 4$ is NP-hard  \cite{Paper_Murty1987} (since deciding the matrix copositivity is NP-complete). However, decomposing a polynomial into SOS is equivalent to solving a semidefinite program (SDP)\footnote{An SDP is a convex optimization problem as $\min\{f(X): \mathcal{A}(X)=0, X\succeq 0\}$ where $X\succeq 0$ means that $X$ is a real symmetric positive semidefinite matrix, $f$ is a linear function, and $\mathcal{A}$ is an affine map of $X$.}, which can be solved, under certain regularity conditions (e.g, Slater's condition), to any desired precision in polynomial time using interior point methods  \cite{Paper_Parrilo2003}. One can use software such as YALMIP  \cite{Yalmip} and SOSTOOLS  \cite{SOSTOOLS} to check if a polynomial is SOS, and to find its SOS decomposition.

Now it is easy to show that the complexity for checking a polynomial of even degree ($\geq 4$) to be CSOS is NP-hard, because a convincing example in  \cite{Paper_Ahmadi2013} shows that determining the convexity of a quartic form (a CSOS polynomial) is NP-hard.	For this reason, in practice, we are particularly interested in polynomials that are both CSOS and SOS-convex (i.e., PSD SOS-convex), since they can be checked in polynomial time and hold all properties of CSOS polynomials. Note that CSOS and SOS-convex are not equivalent whose relationship will be discussed below.

\paragraph{\textbf{Relationships (CSOS, SOS-convexity and PSD convexity)}} 
\begin{proposition}\label{prop:2.9}
	A CSOS or PSD convex polynomial may not be SOS-convex. Conversely, an SOS-convex polynomial can be neither CSOS nor PSD convex.
\end{proposition}
\begin{proof}
	It is shown in  \cite{Paper_Ahmadi2012} that the next polynomial
	\begin{equation}\label{eq:aaa'spoly}
		\begin{array}{rl}
			p(x)=&32x_1^8+118x_1^6x_2^2+40x_1^6x_3^2+25x_1^4x_2^4-43x_1^4x_2^2x_3^2\\
			&-35x_1^4x_3^4+3x_1^2x_2^4x_3^2-16x_1^2x_2^2x_3^4+24x_1^2x_3^6+16x_2^8\\
			&+44x_2^6x_3^2+70x_2^4x_3^4+60x_2^2x_3^6+30x_3^8
		\end{array}
	\end{equation}
	is PSD, SOS and convex, i.e., both CSOS and PSD convex, but it is not SOS-convex. Conversely, the SOS-convexity does not imply nonnegativity, \eg, the univariate quadratic polynomial $p(x) = x^2-x-1$ is SOS-convex since its Hessian matrix $\nabla^2 p(x) = 2$ is an SOS-matrix, but $p(x)$ is neither SOS nor PSD. 
\end{proof}

The gap between the SOS-convexity and the convexity of polynomials is characterized in
 \cite{Paper_Ahmadi2011} as a convex polynomial in $n$ variables and of degree $d$ is SOS-convex if and only if $n = 1$ or $d = 2$ or $(n,d)=(2,4)$. It is interesting to observe that PSD is equivalent to SOS exactly in dimensions and degrees where convexity is equivalent to SOS-convexity. Although the question remains open whether there is a deep connection between them.

An interesting question is: \emph{Can we rewrite any SOS-convex polynomial as a CSOS polynomial plus a constant?} This question arises from the observation that the SOS-convex polynomial $x^2-x-1$ can be rewritten as the CSOS polynomial $(x-\frac{1}{2})^2$ plus $- \frac{5}{4}$. Unfortunately, the answer to this question is NO, for example, the polynomial $p(x,y) = x^2 - y +1$ is SOS-convex since its Hessian matrix $\nabla^2 p(x,y) = \begin{bmatrix}
	2 & 0\\
	0 & 0
\end{bmatrix} = \begin{bmatrix}
	\sqrt{2}\\
	0
\end{bmatrix}   \begin{bmatrix}
	\sqrt{2} & 0
\end{bmatrix}$ is an SOS-matrix. However, $p(0,y)\to -\infty$ as $y\to +\infty$ implies that there is no constant $c\in \R$ such that $p(x,y)+c$ is PSD. Hence, $p(x,y)$ cannot be rewritten as a CSOS polynomial plus a constant. This convincing example shows that we cannot regard CSOS as a special case or an extension of SOS-convex. 

Regarding PSD convex versus CSOS. We know that the PSD convex polynomial form is not equivalent to the CSOS polynomial form. Blekherman \cite{Paper_Blekherman2009} showed that there exist convex forms that are not SOS in a nonconstructive way. Saunderson recently provided a specific example in \cite{saunderson2022convex} that a convex form of degree 4 in 272 variables is not SOS. Hence, we have no equivalence between CSOS polynomials and PSD convex polynomials. The next proposition summarizes some known facts.
\begin{proposition}\label{prop:CSOSvsPSDconvex}
	The set of CSOS polynomials is not equivalent to the set of PSD convex polynomials and 
	\begin{enumerate} 
		\item[(i)] Any CSOS polynomial is PSD convex, the converse may not be true.
		\item[(ii)] Any PSD SOS-convex polynomial is CSOS, the converse may not be true.
	\end{enumerate}	
\end{proposition}
\begin{proof}
	(i) By the definition of CSOS, it is clear that CSOS implies PSD convex. Conversely, the example of Saunderson in \cite{saunderson2022convex} serves as a counterexample that a PSD convex polynomial may not be CSOS even to polynomial forms. Note that the polynomial given in \cref{eq:aaa'spoly} cannot serve as a counterexample since it is indeed CSOS.\\ 
	(ii) It is shown in  \cite{Paper_Helton2010} that a nonnegative SOS-convex polynomial is both convex and SOS (\ie, CSOS). Conversely, \Cref{prop:2.9} shows that CSOS may not be SOS-convex.
\end{proof}	

The relationships among the sets of convex, PSD, SOS, CSOS, SOS-convex and PSD convex polynomials are summarized below:
\begin{center}
	\begin{tikzpicture}
		\matrix[matrix of nodes] {
			PSD SOS-Convex & & $\subset$ & & CSOS & $\subset$ & SOS\\
			$\cap$ & & & & $\cap$ & & $\cap$\\
			SOS-Convex & $\subset$ & Convex & $\supset$ & PSD convex & $\subset$ & PSD\\
		};
	\end{tikzpicture}
\end{center}
Note that PSD SOS-convex are identical to CSOS SOS-convex and SOS SOS-convex.

\subsection{The vector spaces of D-SOS and DC-SOS polynomials}
\begin{definition}[D-SOS and DC-SOS polynomial]\label{def:D-SOS}
	If a polynomial $p$ can be written as 
	$p = s_1 - s_2$
	where $s_1$ and $s_2$  are SOS (resp. CSOS) polynomials, then $p$ is called \emph{Difference-of-Sums-Of-Squares} (D-SOS) (resp. \emph{Difference-of-Convex-Sums-Of-Squares} (DC-SOS)). 
	The polynomials $s_1$ and $s_2$ are termed as  \emph{D-SOS (resp. DC-SOS) components} of $p$. The set of D-SOS (resp. DC-SOS) polynomials of $\R[x]$ is denoted by $\D-SOS_n$ (resp. $\DC-SOS_{n}$), and the subset of $\D-SOS_n$ (resp. $\DC-SOS_{n}$) in $\R_d[x]$ is denoted by $\D-SOS_{n,d}$ (resp. $\DC-SOS_{n,d}$).
\end{definition}

Note that the D-SOS and DC-SOS polynomials could be of either even or odd degrees, whereas both SOS and CSOS polynomials are only of even degrees.   

\paragraph{\textbf{Algebraic properties of D-SOS and DC-SOS}}
\begin{theorem}\label{thm:1-2}
	$\D-SOS_n$ and $\DC-SOS_n$ (resp. $\D-SOS_{n,d}$ and $\DC-SOS_{n,d}$) are vector subspaces of $\R[x]$ (resp. $\R_d[x]$).
\end{theorem}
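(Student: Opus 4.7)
The plan is to verify directly the three vector-subspace axioms (containing zero, closed under addition, closed under scalar multiplication) for both $\DSOS_n$ and $\DCSOS_n$, leveraging the already-established fact that $\SOS_n$ and $\CSOS_n$ are proper (in particular, convex) cones (\cref{thm:sos&psd,thm:1-1}) and are thus closed under positive combinations (\cref{prof:sos&psd,prof:csos}).

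First I would observe that the zero polynomial lies in both sets via the trivial decomposition $0 = 0 - 0$, since $0 \in \SOS_n \subseteq \CSOS_n$. Next, given $p = s_1 - s_2$ and $q = t_1 - t_2$ in $\DSOS_n$ (respectively $\DCSOS_n$), write
\begin{equation*}
p + q = (s_1 + t_1) - (s_2 + t_2),
\end{equation*}
and note that $s_1 + t_1$ and $s_2 + t_2$ lie in $\SOS_n$ (respectively $\CSOS_n$) by closure under positive combinations, giving $p+q \in \DSOS_n$ (respectively $\DCSOS_n$). The only mildly delicate step is scalar multiplication: for $\lambda \geq 0$, write $\lambda p = \lambda s_1 - \lambda s_2$ directly; for $\lambda < 0$, swap the components and write $\lambda p = (-\lambda) s_2 - (-\lambda) s_1$, where now $-\lambda > 0$ and both components remain in $\SOS_n$ (respectively $\CSOS_n$).

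For the bounded-degree statement, I would simply remark that $\DSOS_{n,d} = \DSOS_n \cap \R[x]_d$ and $\DCSOS_{n,d} = \DCSOS_n \cap \R[x]_d$; since the intersection of two vector subspaces is again a vector subspace, the claim for $\R[x]_d$ follows immediately from the claim for $\R[x]$. No extra care about the degrees of the individual SOS or CSOS components is required, because the defining degree bound is imposed on the difference $p$ itself rather than on $s_1$ and $s_2$ separately.

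The main ``obstacle'' here is essentially notational rather than mathematical: the result is an instance of the classical fact that for any convex cone $C$ in a vector space $V$, the difference set $C - C = \{a - b : a, b \in C\}$ coincides with the linear span of $C$ and is therefore a subspace of $V$. The only subtle point worth highlighting in the write-up is the sign-flip trick for negative scalars, since without it one might mistakenly worry that $\DSOS_n$ and $\DCSOS_n$ are merely convex cones rather than full vector subspaces.
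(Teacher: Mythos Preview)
Your argument is correct and essentially identical to the paper's: both verify the subspace axioms directly, using the sign-flip $\lambda p = (-\lambda)s_2 - (-\lambda)s_1$ for negative scalars, and the paper simply remarks that the $\DSOS_{n,d}$, $\DCSOS_n$, $\DCSOS_{n,d}$ cases are handled ``the same''. One small slip: you wrote $0 \in \SOS_n \subseteq \CSOS_n$, but the inclusion goes the other way ($\CSOS_n \subseteq \SOS_n$); this does not affect the argument since $0$ lies in both cones anyway.
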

The proof is trivial and thus omitted. Note that $\D-SOS_{n}$ (resp. $\DC-SOS_{n}$) is a vector space spanned by $\SOS_{n}$ (resp. $\CSOS_{n}$) as $\D-SOS_{n} = \SOS_{n} - \SOS_{n}$ (resp. $\DC-SOS_{n} = \CSOS_{n} - \CSOS_{n}$). Moreover, $\DC-SOS_{n}\subset \D-SOS_{n}$ since $\CSOS_{n}\subset \SOS_{n}$. We even have a stronger result that $\DC-SOS_{n} = \D-SOS_{n}$ as proved later in \Cref{thm:universalDSOS&DCSOS}.

The next proposition summarizes some usual operations on $\D-SOS_n$ and $\DC-SOS_n$.
\begin{proposition}\label{prop:CVofGP-DSOS-2}
    The set $\D-SOS_n$ (resp. $\DC-SOS_n$) has the property that for any pair $(p,q) \in \D-SOS_n^2$ (resp. $\DC-SOS_n^2$), it holds that $p \times q \in \D-SOS_n$ (resp. $\DC-SOS_n$). However, $|p|$, $p^+$, $p^-$, $\sup\{p,q\}$, and $\inf\{p,q\}$ do not belong to $\D-SOS_{n}$ (resp. $\DC-SOS_{n}$).
\end{proposition}
\begin{proof} The closedness of multiplication is obtained as follows: for any $p$ and $q$ in $\D-SOS_n$ (resp. $\DC-SOS_n$) with decompositions $$p = \sum_{i=1}^{m}p_i^2 - \sum_{j=1}^{k}\hat{p}_j^2 \quad \text{and}\quad q = \sum_{i'=1}^{m'}q_{i'}^2 - \sum_{j'=1}^{k'}\hat{q}_{j'}^2.$$
	Then, a D-SOS decomposition of $p\times q$ is  
	\begin{equation}\label{eq:prodD-SOS}
		\footnotesize\underbrace{\left(\sum_{i=1}^{m}\sum_{i'=1}^{m'} (p_i\times q_{i'})^2 + \sum_{j=1}^{k}\sum_{j'=1}^{k'}(\hat{p}_j\times \hat{q}_{j'})^2 \right)}_{\in \SOS_n} 
			- \underbrace{\left(\sum_{i=1}^{m}\sum_{j'=1}^{k'} (p_i\times\hat{q}_{j'})^2 + \sum_{j=1}^{k}\sum_{i'=1}^{m'}(\hat{p}_j\times q_{i'})^2 \right)}_{\in \SOS_n},
	\end{equation}	
	and a DC-SOS decomposition of $p\times q$ is  
	\begin{equation}\label{eq:prodDC-SOS}
		\footnotesize\underbrace{\frac{1}{2}\left[ \left( \sum_{i=1}^{m}p_i^2 + \sum_{i'=1}^{m'}q_{i'}^2\right)^2 + \left(\sum_{j=1}^{k}\hat{p}_j^2 + \sum_{j'=1}^{k'}\hat{q}_{j'}^2 \right)^2 \right]}_{\in \CSOS_n} - \underbrace{\frac{1}{2}\left[ \left( \sum_{i=1}^{m}p_i^2 + \sum_{j'=1}^{k'}\hat{q}_{j'}^2 \right)^2 + \left(\sum_{j=1}^{k}\hat{p}_j^2 + \sum_{i'=1}^{m'}q_{i'}^2\right)^2 \right]}_{\in \CSOS_n}. 
	\end{equation} 
	The operations $|p|$, $p^+$, $p^-$, $\sup\{p,q\}$ and $\inf\{p,q\}$ are not closed since they may not be polynomials. For instance, let $p(x)=x^2-x^4$ and $q(x)=x^4$ with $x\in \R$, then the polynomials $p$ and $q$ are both D-SOS and DC-SOS, but $|p|$, $p^+$, $p^-$, $\sup\{p,q\}$ and $\inf\{p,q\}$ are not polynomials. 
\end{proof}

Note that for DC-SOS polynomials $p$ and $q$, although $|p|$, $p^+$, $p^-$, $\sup\{p,q\}$ and $\inf\{p,q\}$ may not be polynomials, but they are still DC functions. 

An important question is the \emph{representability} of any polynomial as D-SOS or DC-SOS. As the main results of D-SOS and DC-SOS decompositions, we will prove that this assertion is true as in \Cref{thm:universalDSOS&DCSOS}, and such decompositions can be constructed by solving an SDP as discussed later in \Cref{thm:complexityforD-SOSandDC-SOS}.

\begin{theorem}\label{thm:universalDSOS&DCSOS}
	Let $n\in \N^*$ and $x\in \R^n$. Then
	\begin{itemize}
		\item[(i)] $\R[x]=\D-SOS_{n} = \DC-SOS_{n}$.
		\item[(ii)] For any D-SOS (resp. DC-SOS) components $s_1$ and $s_2$ of polynomial $p\in \R[x]$, we have $\max\{\deg(s_1),\deg(s_2)\}\geq 2\lceil\frac{\deg(p)}{2} \rceil$.
		\item[(iii)] For any $p\in \R[x]$, there exist D-SOS (resp. DC-SOS) components $s_1$ and $s_2$ such that $\max\{\deg(s_1),\deg(s_2)\}= 2\lceil\frac{\deg(p)}{2} \rceil$.
	\end{itemize}
\end{theorem}
\begin{proof}
	(i) We first prove that $\R_{2d}[x]\subset \D-SOS_{n,2d}$ and $\R_{2d}[x]\subset \DC-SOS_{n,2d}$. The reverse inclusions are obvious. Let $p\in \R_{2d}[x]$ and assume $p\notin \D-SOS_{n,2d}$ (resp. $p\notin\DC-SOS_{n,2d}$). Then, based on \Cref{thm:1-1}, the set $\SOS_{n,2d}$ (resp. $\CSOS_{n,2d}$) is a proper cone, so $\Int{\SOS_{n,2d}}$\footnote{$\Int{A}$ stands for the interior of the set $A$.} (resp. $\Int{\CSOS_{n,2d}}$) is nonempty. Therefore, for any $p_1\in \Int{\SOS_{n,2d}}$ (resp. $\Int{\CSOS_{n,2d}}$) with $p_1\neq p$, there exists an element $p_2$ in the segment $[p,p_1]$ such that $p_2\in \SOS_{n,2d}$ (resp.  $\CSOS_{n,2d}$). Thus, $\exists\lambda \in (0,1)$ such that $p_2 = \lambda\times p + (1-\lambda)\times p_1,$
	which is rewritten as  
	$$p = \frac{1}{\lambda}\times p_2 - \frac{(1-\lambda)}{\lambda}\times p_1$$
	with $\frac{1}{\lambda}\times p_2$ and $\frac{(1-\lambda)}{\lambda}\times p_1$ in $\SOS_{n,2d}$ (resp. $\CSOS_{n,2d}$). Therefore, $p$ can be presented as difference of two vectors in $\SOS_{n,2d}$ (resp. $\CSOS_{n,2d}$) implying that $p\in \D-SOS_{n,2d}$ (resp. $p\in\DC-SOS_{n,2d}$). This leads to a contradiction! Hence \begin{equation}\label{eq:propi}\R_{2d}[x] = \DC-SOS_{n,2d} = \D-SOS_{n,2d}.\end{equation}
	In addition of \begin{equation}\label{eq:propii}
\R_{2d+1}[x]\subset\R_{2d+2}[x] = \DC-SOS_{n,2d+2},\end{equation} we conclude immediately from \eqref{eq:propi} and \eqref{eq:propii} that $$\R[x]=\D-SOS_{n} = \DC-SOS_{n}.$$
	(ii) It follows from $s_1-s_2 = p$ that $\max\{\deg(s_1),\deg(s_2)\}\geq \deg(p)$. Moreover, $s_1$ and $s_2$ are of even degrees implies that $\max\{\deg(s_1),\deg(s_2)\}$ is even and $\geq 2\lceil\frac{\deg(p)}{2} \rceil$.\\
	(iii) For any $p\in \R[x]$ of even or odd degree, we get from \eqref{eq:propi} and \eqref{eq:propii} that there exist D-SOS (resp. DC-SOS) components $s_1$ and $s_2$ with $\max\{\deg(s_1),\deg(s_2)\}\leq 2\lceil\frac{\deg(p)}{2} \rceil$. Combining with (ii), we have $\max\{\deg(s_1),\deg(s_2)\}= 2\lceil\frac{\deg(p)}{2} \rceil$. 
\end{proof}

\paragraph{\textbf{Complexity of D-SOS and DC-SOS decompositions}}
\begin{theorem}\label{thm:complexityforD-SOSandDC-SOS}
	A D-SOS (resp. DC-SOS) decomposition of $p\in \R_d[x]$ can be constructed for any desired precision in polynomial time by solving an SDP.
\end{theorem}
\begin{proof} We first prove the D-SOS decomposition, then extend the result to the DC-SOS decomposition.\\ 
$\rhd$ For the D-SOS decomposition: Let $b(x)=(1,x_1,\ldots,x_n,x_1x_2,\ldots,x_n^{\lceil\frac{d}{2}\rceil})\tran$ be a vector of all monomials in variable $x\in \R^n$ and of degree up to $\lceil\frac{d}{2}\rceil$ (namely, the full-basis of $\R_{\lceil\frac{d}{2}\rceil}[x]$). Then, for any $p\in \R_d[x]$, there exist two real symmetric positive semidefinite matrices $Q_1$ and $Q_2$ such that 
	\begin{equation}\label{eq:gramianform}
		p=(b\tran  Q_1  b)-(b\tran  Q_2  b).
	\end{equation}
Thus, all desired D-SOS decompositions of $p$ are included in the set 
	$$S(b)=\{ (Q_1,Q_2): \underbrace{p=b\tran  (Q_1-Q_2)  b}_{\text{linear equations}}, Q_1\succeq 0, Q_2\succeq 0 \},$$
	where the constraint $p=b\tran  (Q_1-Q_2)  b$ is affine in $(Q_1,Q_2)$. Hence, $S(b)$ is in fact an SDP. Moreover, for any solution $(Q_1,Q_2)\in S(b)$ and for any $a>0$, we have $(Q_1+aI,Q_2+aI)\in S(b)$ with $Q_1+aI\succ 0, Q_2+aI\succ 0$. Thus, the Slater condition holds and $S(b)$ can be solved in polynomial time using interior point methods. \\
$\rhd$ Similarly, finding a DC-SOS decomposition amounts to searching in $S(b)$ a decomposition in form of \cref{eq:gramianform} such that $b\tran  Q_1  b$ and $b\tran  Q_2  b$ are both convex. By the fact that checking the SOS-convexity can be done in polynomial time, so that we can find a difference of SOS-convex SOS decomposition (a special DC-SOS decomposition) for $p$ by solving the feasibility problem:
	$$\widehat{S}(b)=\{ (Q_1,Q_2)\in S(b): \nabla^2 (b\tran  Q_1  b) \text{ and } \nabla^2(b\tran  Q_2  b) \text{ are SOS-matrices}\}.$$
	Based on \Cref{lemma:sosmatrix}, checking the polynomial matrix $\nabla^2 (b\tran  Q_i  b), i=1,2$ being SOS-matrix is equivalent to an SDP, so that $\widehat{S}(b)$ is again an SDP, which can be solved for any desired precision in polynomial time using interior point methods.
\end{proof}

Note that the difference of diagonally-dominant-sos-convex polynomial decomposition introduced in  \cite{Paper_Ahmadi2017} is in fact a special case of D-SOS and DC-SOS decompositions. 

Concerning the numerical solution of SDP, there are several SDP solvers such as MOSEK (by MOSEK ApS) \cite{Mosek}, SeDuMi (by Jos F. Sturm)  \cite{SeDuMi}, SDPT3 (by Kim-Chuan Toh, Michael J. Todd, and Reha H. Tutuncu)  \cite{SDPT3}, CSDP (by Helmberg, Rendl, Vanderbei, and Wolkowicz)  \cite{CSDP}, SDPA (by Masakazu Kojima, Mituhiro Fukuda et al.)  \cite{SDPA} and DSDP(by Steve Benson, Yinyu Ye, and Xiong Zhang)  \cite{DSDP}. Although SDP can be solved for any desired precision in polynomial time, but numerically solving large-scale and dense SDP is very expensive, e.g., the SDP resulted with the full-basis $b(x)$ is indeed in this case. So it is strongly not suggested to solve an SDP for constructing D-SOS and DC-SOS decompositions in practice. Moreover, the numerical solution of an SDP can only provide approximate D-SOS and DC-SOS decompositions, which could be unstable for some ill-conditioned polynomials. 

In the rest of the paper, we will focus on establishing some practical algorithms for constructing exact D-SOS and DC-SOS polynomial decompositions without solving any SDP, which are more efficient and stable in practice.  

\section{Spectral D-SOS decomposition techniques}\label{sec:D-SOS}
In this section, we will establish a D-SOS decomposition technique, namely \emph{spectral D-SOS decomposition}, based on the well-known spectral decomposition of real symmetric matrix. First, a general algorithm of the spectral D-SOS decomposition is proposed in \Cref{subsec:GS-DSOS}; Then, two variants based on \emph{direct basis} and \emph{minimal basis} are presented respectively in \Cref{subsec:DB-DSOS,subsec:MBS-DSOS}. 

\subsection{General spectral D-SOS decomposition}\label{subsec:GS-DSOS} Let us start by introducing the definition of the \emph{valid basis} to represent a polynomial using a set of monomials and a real symmetric matrix.

\begin{definition}[Valid basis]\label{def:validbasis}
	A \emph{valid basis} of $p\in \R[x]$ is a set of monomials, denoted by $b$ (in a vector form), such that there exists a real symmetric matrix $Q$ verifying $p=b\tran  Q  b.$
\end{definition}

For example, the set of all monomials in $\R_{\lceil\frac{d}{2}\rceil}[x]$ (as in \Cref{thm:complexityforD-SOSandDC-SOS}) is a valid basis for any polynomials in $\R_d[x]$. Clearly, a valid basis for a polynomial may not be unique, e.g., the polynomial $x_1+x_1^2x_2$ has many valid bases such as
$$b(x)=\begin{bmatrix}
	1\\
	x_1\\
	x_1^2x_2
\end{bmatrix}, Q = \begin{bmatrix}
	0 & \frac{1}{2} & \frac{1}{2} \\
	\frac{1}{2} & 0 & 0\\
	\frac{1}{2} & 0 & 0\\
\end{bmatrix} \text{ and }
b(x)=\begin{bmatrix}
	1\\
	x_1\\
	x_1x_2
\end{bmatrix}, Q = \begin{bmatrix}
	0 & \frac{1}{2} & 0 \\
	\frac{1}{2} & 0 & \frac{1}{2}\\
	0 & \frac{1}{2} & 0\\
\end{bmatrix}.$$

For any valid basis $b$, the associated symmetric matrix $Q$ can be computed by solving a linear system extracted from the equation
\begin{equation}\label{eq:computeQ}
	p = b\tran  Q   b,
\end{equation}
which provides a method to check a given set of monomials $b$ to be a valid basis for a polynomial $p$. If the linear system associated with \eqref{eq:computeQ} has no solution, then $b$ is not valid. Otherwise, for any solution $\widehat{Q}$ (maybe non-symmetric), the symmetric matrix $Q=\frac{1}{2}(\widehat{Q}^{\top}+\widehat{Q})$ is also a solution. 

\begin{proposition}[Spectral D-SOS Decomposition]
	\label{thm:spectralD-SOSdecomposition}
	Let $p(x)$ be a polynomial in $\R[x]$, $b(x)$ be a valid basis of $p(x)$ with associated symmetric matrix $Q$, $r_b$ be the length of $b(x)$, $\Sp(Q) =\{\lambda_1,\ldots,\lambda_{r_b}\}$ be the set of eigenvalues of $Q$, and $\mathcal{K}=\{k\in \IntEnt{1,r_b}\footnote{$\IntEnt{a,b}$ stands for the set of integers included in the interval $[a,b]$.}:0\neq \lambda_k\in \Sp(Q)\}$ be the index set of nonzero eigenvalues of $Q$. Suppose that $Q$ has a spectral decomposition as $Q=P  \Lambda   P^{\top}$ where $P$ is an orthogonal matrix and $\Lambda$ is a diagonal matrix. Then taking $y(x)=P\tran  b(x)$, we have a D-SOS decomposition for $p(x)$ (namely, \emph{spectral D-SOS decomposition}) defined by: 
	\begin{equation}\label{eq:specD-SOS}
		p(x) = \sum_{i\in \mathcal{K}}\lambda_i y_i^2(x).
	\end{equation}
\end{proposition}
\begin{proof}
	By the symmetry of $Q$, there exists a spectral decomposition of $Q$ as $Q = P  \Lambda   P\tran.$ Then
	$p(x) = b\tran(x)   Q  b(x) = b\tran(x)   P   \Lambda   P\tran   b(x).$
	Let $y(x)=P\tran  b(x)$. Then 
	$p(x) = \sum_{j}\lambda_j y_j^2(x) = \sum_{i\in \mathcal{K}}\lambda_i y_i^2(x).$
\end{proof}

Based on \Cref{thm:spectralD-SOSdecomposition}, the \emph{General Spectral D-SOS Decomposition Algorithm} (cf. GS-DSOS) is established as in \Cref{alg:GS-DSOS}.
\begin{algorithm}[H]
	\caption{General Spectral D-SOS Decomposition (GS-DSOS)}
	\label{alg:GS-DSOS}
	\KwIn{Polynomial $p(x)$.} 
	\KwOut{D-SOS decomposition.}
	\BlankLine
	\textbf{Step 1:} Find a valid basis $b(x)$ and compute its associated matrix $Q$ for $p(x)$.
	
	\textbf{Step 2:} Compute spectral decomposition of $Q$ to get $P$ and $\Lambda$ verifying
	$Q=P  \Lambda  P\tran.$
	
	\textbf{Step 3:} Let $r_b$ be the length of basis $b(x)$, compute $y(x)= P\tran  b(x)$ and $\mathcal{K}=\{k\in \IntEnt{1,r_b}:0\neq \lambda_k\in\Sp(Q)\}$. Then a D-SOS decomposition of $p(x)$ is given by \eqref{eq:specD-SOS}.
\end{algorithm}
\begin{proposition}\label{prop:degreeofGS-DSOS}
	Let $p\in \R[x]$ and $b$ be a valid basis of $p$. Then the maximal degree of D-SOS components generated by \Cref{alg:GS-DSOS} is not greater than $2\deg(b)$.
\end{proposition}
\begin{proof}
	Based on \cref{eq:specD-SOS}, the maximal degree of D-SOS components is not greater than $\displaystyle\max_{i\in \mathcal{K}}\deg(y_i^2(x))\leq 2\deg(y(x))$, then it follows from $\deg(y(x))=\deg(b(x))$ (since $y(x)=P\tran  b(x)$) that the maximal degree of D-SOS components is not greater than $2\deg(b)$. 
\end{proof}

Note that different valid bases leads to different D-SOS decompositions. An important question is: \emph{How to find a valid basis?} We introduce in the following two sections two valid bases, namely \emph{direct basis} and \emph{minimal basis}, and the corresponding spectral D-SOS decompositions.

\subsection{Direct basis spectral D-SOS decomposition}\label{subsec:DB-DSOS}
\begin{definition}[Direct basis]\label{def:direct_basis}
	For a polynomial in form of $p(x) = c_{\alpha^0} + c_{\alpha^1}x^{\alpha^1} + \ldots + c_{\alpha^r}x^{\alpha^r}$ where $(x^{\alpha^i})_{i\in\IntEnt{0,r}}$ are monomials and $(c_{\alpha^i})_{i\in \IntEnt{0,r}}$ are associated coefficients, the vector $b(x)=[1,x^{\alpha^1}, \ldots, x^{\alpha^r}]^{\top}$ is a valid basis (namely \emph{direct basis}) with associated matrix $Q$ defined by
	\begin{equation}
		\label{eq:directbasisQ}
		Q=\left[\begin{array}{c|ccc}
			c_{\alpha^0} & \frac{1}{2}c_{\alpha^1} & \cdots & \frac{1}{2}c_{\alpha^r}\\
			\hline
			\frac{1}{2}c_{\alpha^1} \\
			\vdots & & 0\\
			\frac{1}{2}c_{\alpha^r} 
		\end{array} \right].
	\end{equation}
\end{definition}

Note that the degree of direct basis is equal to the degree of $p$, and the constant monomial $1$ must be included in the direct basis even for a polynomial without constant part (i.e., $c_{\alpha^0}=0$). E.g., the direct basis of $5+x_1x_2$ is $[1, x_1x_2]^{\top}$, and the direct basis of $x_1+x_1^2$ is $[1,x_1,x_1^2]^{\top}$. 

\begin{proposition}\label{thm:eigensofQ}
	Let $Q$ be the matrix defined in \eqref{eq:directbasisQ}. Then $$\Sp(Q) = \{ 0, \lambda^+, \lambda^- \},$$
	with
	\begin{equation}\label{eq:eigenvaluesofdirectbasis}
		\lambda^{+}=\frac{1}{2}\left(c_{\alpha^0}+\sqrt{\sum_{i=0}^{r}c_{\alpha^i}^2}\right)\geq 0 \quad \text{and} \quad \lambda^{-}=\frac{1}{2}\left(c_{\alpha^0}-\sqrt{\sum_{i=0}^{r}c_{\alpha^i}^2}\right)\leq 0.\end{equation}
	If $\lambda^+>0$, then the associated eigenvector is
	\begin{equation}
		\label{eq:vplus}
		v^+ = \left[\displaystyle\sum_{i=1}^{r}c_{\alpha^i}^2,c_{\alpha^1}\left(\sqrt{\displaystyle\sum_{i=0}^{r}c_{\alpha^i}^2 }-c_{\alpha^0}\right),\ldots, c_{\alpha^r}\left(\sqrt{\displaystyle\sum_{i=0}^{r}c_{\alpha^i}^2 }-c_{\alpha^0}\right)\right]\tran.
	\end{equation}
	If $\lambda^-<0$ then the associated eigenvector is 
	\begin{equation}
		\label{eq:vminus}
		v^- = \left[\displaystyle\sum_{i=1}^{r}c_{\alpha^i}^2,-c_{\alpha^1}\left(\sqrt{\displaystyle\sum_{i=0}^{r}c_{\alpha^i}^2 }+c_{\alpha^0}\right),\ldots, -c_{\alpha^r}\left(\sqrt{\displaystyle\sum_{i=0}^{r}c_{\alpha^i}^2 }+c_{\alpha^0}\right)\right]\tran.
	\end{equation}
\end{proposition}
\begin{proof}
	The characteristic polynomial of $Q$ is 
	$$\chi_{_{Q}}(\lambda)=(-1)^{r+1}\lambda^{r-1}(\lambda^2-c_{\alpha^0}\lambda -\frac{1}{4}\sum_{i=1}^{r}c_{\alpha^i}^2),$$ 
	which implies that  $$\Sp(Q) =\{ 0, \lambda^+, \lambda^- \}.$$
	It follows from $|c_{\alpha^0}|\leq \sqrt{\sum_{i=0}^{r}c_{\alpha^i}^2}$ that $\lambda^+=(c_{\alpha^0}+\sqrt{\sum_{i=0}^{r}c_{\alpha^i}^2})/2 \geq 0$ and $\lambda^{-}=(c_{\alpha^0}-\sqrt{\sum_{i=0}^{r}c_{\alpha^i}^2})/2\leq 0$. If $\lambda^+>0$ (or $\lambda^{-}<0$), then it is a simple eigenvalue, and the dimension of its eigenspace is equal to one. Then, it is obvious to check that $v^+$ (or $v^-$) is the associate eigenvector.
\end{proof}

Based on \Cref{thm:eigensofQ}, the spectral D-SOS decomposition in \Cref{alg:GS-DSOS} is simplified as:
\begin{equation}
	\label{eq:D-SOSfordirectbasis}
	p(x)=\sum_{i\in \mathcal{K}}\lambda_i y_i^2(x) = \lambda^+ \frac{(b\tran(x)  v^+)^2}{\|v^+\|^2} + \lambda^- \frac{(b\tran(x)  v^-)^2}{\|v^-\|^2},
\end{equation}
and the corresponding algorithm, namely \emph{Direct Basis Spectral D-SOS Decomposition Algorithm} (cf. DBS-DSOS), is summarized in \Cref{alg:DBS-DSOS}.
\begin{algorithm}[H]
	\caption{Direct Basis Spectral D-SOS Decomposition (DBS-DSOS)}
	\label{alg:DBS-DSOS}
	\KwIn{Polynomial $p(x)$.} 
	\KwOut{D-SOS decomposition.}
	\BlankLine
	\textbf{Step 1:} Compute direct basis $b(x)$ and associated matrix $Q$ for polynomial $p(x)$.
	
	\textbf{Step 2:} Compute $\lambda^{\pm}$ and $v^{\pm}$ using \Cref{eq:eigenvaluesofdirectbasis,eq:vplus,eq:vminus}.
	
	\textbf{Step 3:} A D-SOS decomposition of $p(x)$ is given by \eqref{eq:D-SOSfordirectbasis}.
\end{algorithm}

\begin{proposition}\label{prop:degreeofDBS-DSOS}
	Let $p$ be a polynomial of $\R[x]$ with direct basis $b$, the maximal degree of D-SOS components generated by \Cref{alg:DBS-DSOS} is upper bounded by $2\deg(p)$.
\end{proposition}
\begin{proof}
	This is an immediate consequence of \Cref{prop:degreeofGS-DSOS} with $\deg(b)= \deg(p)$.
\end{proof}

Note that the number of squares produced in \Cref{alg:DBS-DSOS} is at most $2$, which is an attractive benefit of \Cref{alg:DBS-DSOS}. However, the drawback is that the maximal degree of D-SOS components is $2\deg(p)$,  twice than the minimal one $2\lceil\frac{\deg(p)}{2} \rceil$. Based on \Cref{prop:degreeofGS-DSOS}, we can consider minimizing the degree of valid basis to get a \emph{Minimal Basis Spectral D-SOS Decomposition} discussed in next subsection.

\subsection{Minimal basis spectral D-SOS decomposition}\label{subsec:MBS-DSOS}
\begin{definition}[Minimal basis]\label{def:minimal_basis}
	Let $p$ be a polynomial of $\R[x]$. Then a \emph{minimal basis} of $p$ is a valid basis whose degree is not greater than $\lceil \frac{\deg(p)}{2}\rceil$. \end{definition}

\begin{proposition}
	\label{thm:existanceofminimalbasis}
	For any polynomial $p\in \R_d[x]$, there exist minimal bases. 
\end{proposition}
\begin{proof}
	The full-basis $b(x)=(1,x_1,\ldots,x_n,x_1x_2,\ldots,x_n^{\lceil\frac{d}{2}\rceil})\tran$ with all monomials in $\R_{\lceil\frac{d}{2}\rceil}[x]$ is of course a minimal basis for any polynomial $p\in \R_{d}[x]$. Moreover, it is possible to have a subset of the full-basis as a valid basis.
\end{proof}

Note that the minimal basis is in general not unique for a polynomial, and we prefer in practice a shorter length minimal basis. E.g., $\{x_1\}$ is a minimal basis of the shortest length for $x_1^2$ and $\{x_1x_2,x_2^2 \}$ for $x_1x_2^3$. Inspired by these examples, we generalize the idea to construct a minimal basis using the \textbf{Procedure B}.

\begin{proc}{h!}{Procedure B}
	\caption{Minimal basis for polynomial $p(x)$}
	\label{proc:B}
	
	\textbf{Step 1:} Get the list of monomials for $p(x)$ as the set $M=\{m_1,\ldots,m_r\}$.
	
	\textbf{Step 2:} For each monomial $m\in M$, let $\mathcal{O}(m)$ be the index set of odd degree variables in $m$, a minimal basis of $m$ is based on the next two cases:\\
	$\rhd$ If $\mathcal{O}(m)=\emptyset$, then a minimal length minimal basis of $m(x)=c_{\alpha} x^{\alpha}$ is $K(m)=\{x^{\frac{\alpha}{2}} \}$.\\
	$\rhd$ Otherwise, we partition $\mathcal{O}(m)$ into two parts $\mathcal{O}_1(m)$ and $\mathcal{O}_2(m)$ such that:
	\begin{enumerate}
		\item[$\bullet$] $\mathcal{O}_1(m)\cap \mathcal{O}_2(m) = \emptyset$, $\mathcal{O}_1(m) \cup \mathcal{O}_2(m) = \mathcal{O}(m)$.
		\item[$\bullet$] $|\mathcal{O}_1(m)|$ and $|\mathcal{O}_2(m)|$ belong to $\{ \lceil\frac{|\mathcal{O}(m)|}{2}\rceil \}\cup \{\lfloor \frac{|\mathcal{O}(m)|}{2}\rfloor \}$. 
	\end{enumerate}
	Then a minimal length minimal basis of $m(x)=c_{\alpha}x^{\alpha}$ is 
	$$K(m) = \left\{\displaystyle x^{\lfloor\frac{\alpha}{2} \rfloor}\prod_{k\in \mathcal{O}_1(m)} x_k,\ \displaystyle x^{\lfloor\frac{\alpha}{2} \rfloor}\prod_{k\in \mathcal{O}_2(m)} x_k\right\}.$$
	\textbf{Step 3:} A minimal basis for $p(x)$ is given by 
	$b(x) = \bigcup_{m\in M} K(m).$
\end{proc}

For example, using \textbf{Procedure B} to get a minimal basis for the polynomial $x_1^2x_2^6  - 2x_1^3x_2^{100} + 10$: the set of monomials (without coefficients) is $M=\{x_1^2x_2^6, x_1^3x_2^{100},1\}$. For each monomial in $M$, we get minimal basis $\{x_1x_2^3\}$ for $x_1^2x_2^6$, $\{x_1x_2^{50},x_1^2x_2^{50}\}$ for $x_1^3x_2^{100}$, and $\{1\}$ for $1$. So that a minimal basis of $p(x)$ is 
$b(x) = \{x_1x_2^3\}\cup \{x_1x_2^{50},x_1^2x_2^{50}\} \cup \{1\} = \{1,x_1x_2^3,x_1x_2^{50},x_1^2x_2^{50}\}$,
which is much shorter than the full-basis of polynomials of two variables and of degree $\lceil \frac{103}{2}\rceil$.

Our proposed \emph{Minimal Basis Spectral D-SOS Decomposition Algorithm} (cf. MBS-DSOS) is described in \Cref{alg:MBS-DSOS}.
\begin{algorithm}[h!]
	\caption{Minimal Basis Spectral D-SOS Decomposition (MBS-DSOS)}
	\label{alg:MBS-DSOS}
	\KwIn{Polynomial $p(x)$.} 
	\KwOut{D-SOS decomposition.}
	\BlankLine
	\textbf{Step 1:} Compute $b(x)$ and $Q$ for $p(x)$ using \textbf{Procedure B}.
	
	\textbf{Step 2:} Spectral decomposition of $Q$ to get $P$ and $\Lambda$.
	
	\textbf{Step 3:} Let $r_b$ be the length of basis $b(x)$, compute $y(x)= P\tran  b(x)$ and $\mathcal{K}=\{k\in \IntEnt{1,r_b}:0\neq \lambda_k\in\Sp(Q)\}$. Then a D-SOS decomposition of $p(x)$ is given by \eqref{eq:specD-SOS}.
\end{algorithm}

\begin{proposition}\label{prop:degreeofMBS-DSOS}
	Let $p\in\R[x]$ with a minimal basis $b$. Then the maximal degree of D-SOS components generated by \Cref{alg:MBS-DSOS} equals $2\lceil \frac{\deg(p)}{2}\rceil$.
\end{proposition}
\begin{proof}
	Based on the definition of minimal basis, $\deg(b)\leq \lceil \frac{\deg(p)}{2}\rceil$. Then it follows from \Cref{prop:degreeofGS-DSOS} that the maximal degree of D-SOS components equals $2\deg(b) \leq 2\lceil \frac{\deg(p)}{2}\rceil$. On the other hand, according to \Cref{thm:universalDSOS&DCSOS} (iv), the maximal degree of D-SOS components is lower bounded by $2\lceil \frac{\deg(p)}{2}\rceil$. We conclude that the maximal degree of D-SOS components equals $2\lceil \frac{\deg(p)}{2}\rceil$.
\end{proof}

Note that applying \Cref{alg:MBS-DSOS} for quadratic form yields the spectral decomposition for quadratic form, which is both D-SOS and DC-SOS. Hence, the minimal basis spectral D-SOS decomposition can be considered as the generalization of spectral decomposition for polynomials. 

\subsection{Undominated D-SOS decomposition}\label{subsec:undominatedDSOS}
A qualification for D-SOS decomposition is the `domination' defined as follows:
\begin{definition}[Undominated D-SOS decomposition]
	A D-SOS decomposition for polynomial $p\in \R[x]$ with components $s_1$ and $s_2$ is called \emph{undominated} if and only if for any nonzero SOS polynomial $s$, the polynomials $s_1-s$ and $s_2-s$ are not SOS.   
\end{definition}

For example, the minimal basis spectral D-SOS decomposition for the quadratic form $x\tran   Q   x$ is undominated.

A natural question arises that: \emph{Is any minimal basis spectral D-SOS decomposition undominated?} The next theorem gives an affirmative answer. 
\begin{theorem}\label{thm:undom}
	Any minimal basis spectral D-SOS decomposition is undominated. 
\end{theorem}
\begin{proof}
	For any $p\in \R[x]$, let $\bar{d} = \lceil\frac{\deg(p)}{2}\rceil$. Consider the minimal basis spectral D-SOS decomposition $s_1-s_2$ associated with the full-basis $b(x)=(1,x_1,\ldots,x_n,x_1x_2,\ldots,x_n^{\bar{d}})\tran$ and the corresponding matrix $Q$. Suppose that there are $q$ nonnegative eigenvalues of $Q$ (denoted by $\lambda_1,\ldots,\lambda_q$) and $\bar{d}-q$ negative eigenvalues of $Q$ (denoted by $\lambda_{q+1},\ldots,\lambda_{\bar{d}}$), let $\Lambda^+$ and $\Lambda^-$ be the block diagonal matrices:
	$$\Lambda^+ = \left[\begin{array}{ccc|ccc}
		\lambda_1& & & & &\\
		& \ddots & & & 0_{q,\bar{d}-q}\\
		& & \lambda_q\\
		\hline
		& & & & & \\
		& 0_{\bar{d}-q,q} & & & 0_{\bar{d}-q,\bar{d}-q}& \\
		& & & & &
	\end{array} \right], 
	\Lambda^- = \left[\begin{array}{ccc|ccc}
		& & & & & \\
		& 0_{q,q}& & & 0_{q,\bar{d}-q} & \\
		& & & & & \\
		\hline
		& & & \lambda_{q+1} \\
		& 0_{\bar{d}-q,q} & & & \ddots \\
		& & & & & \lambda_{\bar{d}}
	\end{array} \right].$$
	Then $$s_1 = b(x)\tran    P   \Lambda^+   P\tran   b(x),\quad s_2 = b(x)\tran    P   (-\Lambda^-)   P\tran   b(x).$$
	It follows that, for any nonzero SOS polynomial $s$, if both $s_1-s$ and $s_2-s$ remain SOS, then $\deg(s)\leq \min \{\deg(s_1),\deg(s_2)\} = 2\lceil\frac{\deg(p)}{2}\rceil$. Furthermore, there exists a symmetric matrix $\widehat{Q}$, which is of the same size as the matrix $Q$, such that 
	$$s\in \SOS, \deg(s)\leq 2\lceil\frac{\deg(p)}{2}\rceil \implies \begin{cases}
		s(x) = b(x)\tran   \widehat{Q}  b(x)\\
		\widehat{Q}\succeq 0\\
	\end{cases},$$
	$$\left[s_1-s\in \SOS \implies P   \Lambda^+   P\tran-\widehat{Q} \succeq 0\right]\text{ and }
	\left[s_2-s\in \SOS \implies P   (-\Lambda^-)   P\tran-\widehat{Q} \succeq 0\right].$$
	Hence \begin{equation}\label{eq:undom}
		\begin{cases}
			s(x) = b(x)\tran   \widehat{Q}  b(x)\\
			\widehat{Q}\succeq 0\\
			P   \Lambda^+   P\tran-\widehat{Q} \succeq 0\\
			P   (-\Lambda^-)   P\tran-\widehat{Q} \succeq 0.
		\end{cases}
	\end{equation} 
	Let us separate $\widehat{Q}$ according to the blocks of $\Lambda^+$ and $\Lambda^-$ as:
	$$\widehat{Q} = \left[\begin{array}{c|c}
		\widehat{Q}_{1,1} & \widehat{Q}_{1,2}\\
		\hline
		\widehat{Q}_{1,2}\tran & \widehat{Q}_{2,2}   
	\end{array} \right].$$
	Then, \eqref{eq:undom} is satisfied if and only if $\widehat{Q} = 0$, which implies that:\\
	$\rhd$ $\widehat{Q}\succeq 0 \implies \widehat{Q}_{1,1}\succeq 0$ and $P   (-\Lambda^-)   P\tran-\widehat{Q} \succeq 0 \implies \widehat{Q}_{1,1}\preceq 0$, then $\widehat{Q}_{1,1}=0$.\\
	$\rhd$ By analogue, $\widehat{Q}_{2,2}=0$ is obtained from $\widehat{Q}\succeq 0$ and $P   \Lambda^+   P\tran-\widehat{Q} \succeq 0$.\\
	$\rhd$ $\widehat{Q}=0$ since $\tr(\widehat{Q}) = \tr(\widehat{Q}_{1,1}) + \tr(\widehat{Q}_{2,2}) = 0$ and $\widehat{Q}\succeq 0$.\\	
	Hence, the SOS polynomial $s$ can only be zero, implying that the minimal basis spectral D-SOS decomposition associated with the full-basis is undominated. Finally, as any minimal basis is a subset of the full-basis, $s_1$, $s_2$ and $s$ associated with any minimal basis can be presented in the full-basis, ensuring that the undomination remains valid.
\end{proof}

\subsection{Comparisons of D-SOS decomposition algorithms}
We discuss in this subsection three aspects of different D-SOS decompositions: the maximal degree of D-SOS components, the number of squares in the D-SOS decomposition, and the complexity for constructing a D-SOS decomposition. 

Firstly, the maximal degrees of D-SOS components are summarized in \Cref{tab:compareD-SOSalgos}, 
\begin{table}[h!]
	\begin{center}		
		\caption{Maximal degree of D-SOS components for D-SOS decompositions}\label{tab:compareD-SOSalgos}
		\begin{tabular}{|l|c|}  
			\hline  
			Spectral D-SOS Algorithms& Maximal degree of D-SOS components \\ 
			\hline
			\Cref{alg:GS-DSOS} (GS-DSOS) & $2\deg(b)$ \\
			\Cref{alg:DBS-DSOS} (DBS-DSOS) & $2\deg(p)$ \\
			\Cref{alg:MBS-DSOS} (MBS-DSOS) & $2\lceil \frac{\deg(p)}{2}\rceil$ \\
			\hline
		\end{tabular}
	\end{center}
\end{table}
where (MBS-DSOS) yields the minimal degree D-SOS decompositions. 

Secondly, the number of squares in D-SOS decompositions are described as follows: Let $p$ be a polynomial of $n$ variables, of degree $d$, and of $J$ monomials $m_1,\ldots,m_J$.
\begin{itemize}[leftmargin=12pt]
	\item In (GS-DSOS): Based on \eqref{eq:specD-SOS}, the number of squares is equal to the number of non-zero eigenvalues of matrix $Q$, which equals $|\mathcal{K}|$. 
	\item In (DBS-DSOS): We have at most $2$ squares since there are at most $2$ simple non-zero eigenvalues of matrix $Q$ in (DBS-DSOS) algorithm. 
	\item In (MBS-DSOS): Based on \textbf{Procedure B}, the length of the minimal basis $r_b$ can not exceed neither the length of full-basis $\binom{n+\lceil\frac{d}{2}\rceil}{n}$ nor twice of the number of monomials $2J$. Hence, the number of squares is upper bounded by $\min\{2J, \binom{n+\lceil\frac{d}{2}\rceil}{n}\}$.
\end{itemize}

Lastly, the complexity for constructing a D-SOS decomposition is mainly depending on the number of squares and the complexity to generate these squares. Note that in spectral D-SOS decompositions, Step 1 needs to solve a linear system (often large-scale and sparse) for determining the matrix $Q$, and Step 2 requires a spectral decomposition for $Q$. These computations account for a major proportion in total computation costs. Hence, for cases where the computational cost is not prohibitive, the (MBS-DSOS) will be the best practical choice due to its minimal degree D-SOS components and small number of squares. Numerical performance of these decomposition algorithms tested on randomly generated polynomial dataset will be reported in \Cref{subsec:testsonrandom}.

\section{Parity DC-SOS decomposition techniques} \label{sec:DC-SOS}
D-SOS decompositions are in general not DC-SOS. In this section, we propose some \emph{parity DC-SOS decomposition} techniques for constructing exact DC-SOS decompositions without solving SDP. 

\subsection{General parity DC-SOS decomposition}
Consider a monomial $x^{\alpha}$, it can be rebuilt by the multiplications of three elementary cases: $x_ix_j$, $x_i^{2k},k\in \N$ and $p\times q$ with $p,q\in \DC-SOS$ whose DC-SOS decompositions are summarized as follows:\\
\noindent $\rhd$ \textbf{DC-SOS decompositions for $x_ix_j$:}
\begin{equation}\label{eq:DC-SOS_elementarycase1}
	x_ix_j=\frac{1}{4}(x_i+x_j)^2-\frac{1}{4}(x_i-x_j)^2 \quad \text{ or }\quad x_ix_j=\frac{1}{2}(x_i+x_j)^2-\frac{1}{2}(x_i^2 + x_j^2).
\end{equation} 
A single variable $x_i$ can be viewed as a special case of $x_ix_j$ with $x_j=1$. The degree of DC-SOS components in \eqref{eq:DC-SOS_elementarycase1} are minimized. The first DC-SOS decomposition in $\eqref{eq:DC-SOS_elementarycase1}$ dominates the second since there exists a CSOS polynomial $\frac{1}{4}(x_i+x_j)^2$ such that $\frac{1}{2}(x_i+x_j)^2 - \frac{1}{4}(x_i+x_j)^2 = \frac{1}{4}(x_i+x_j)^2$ and $\frac{1}{2}(x_i^2+x_j^2) - \frac{1}{4}(x_i+x_j)^2 = \frac{1}{4}(x_i-x_j)^2$. \\
\noindent $\rhd$ \textbf{DC-SOS decompositions for $x_i^{2k}, k\in \N$:}
\begin{equation}\label{eq:DC-SOS_elementarycase2}
	x_i^{2k} = x_i^{2k} - 0.
\end{equation}
The degree of DC-SOS components is either $2k$ or $0$, which is a minimal degree DC-SOS decomposition. The constant polynomial $1$ can be viewed to have degree $2k=0$.\\
\noindent \textbf{$\rhd$ DC-SOS decompositions for $p\times q$ with $p,q\in \DC-SOS$:} Let $p=p_1-p_2$ and $q=q_1-q_2$ be DC-SOS decompositions of $p$ and $q$. Then applying \eqref{eq:prodDC-SOS}, we obtain a DC-SOS decomposition for $p\times q$ as:
\begin{equation}
	\label{eq:DC-SOS_elementarycase3}
	p\times q = \frac{1}{2}[(p_1+q_1)^2 + (p_2+q_2)^2] - \frac{1}{2}[(p_1+q_2)^2 + (p_2+q_1)^2],
\end{equation}
where the degree of the DC-SOS components is not greater than $$2\max\{\deg(p_1),\deg(p_2),\deg(q_1),\deg(q_2)\}.$$ Note that \eqref{eq:DC-SOS_elementarycase3} may not be a minimal degree DC-SOS decomposition.

Based on the three elementary cases, the basic idea of the \emph{General Parity DC-SOS Decomposition} (cf. GP-DSOS) is to factorize each monomial $x^{\alpha}$ as elementary cases and producing its DC-SOS decomposition using \Cref{eq:DC-SOS_elementarycase1,eq:DC-SOS_elementarycase2,eq:DC-SOS_elementarycase3}. The detailed algorithm is described in \Cref{alg:GP-DCSOS}. 
\begin{algorithm}[h]
	\caption{General Parity DC-SOS Decomposition (GP-DCSOS)}
	\label{alg:GP-DCSOS}
	\KwIn{Monomial $m(x)$.} 
	\KwOut{DC-SOS decomposition.}
	\BlankLine
	\textbf{Step 1:} Extract $x^{\alpha}$ and $c_{\alpha}$ from $m(x)$.
	
	\textbf{Step 2:} Factorize $x^{\alpha}$ as elementary cases.
	
	\textbf{Step 3:} 
	Compute a DC-SOS decomposition for $x^{\alpha}$ (using \cref{eq:DC-SOS_elementarycase1,eq:DC-SOS_elementarycase2,eq:DC-SOS_elementarycase3}) as
	$$x^{\alpha}=s_1(x) - s_2(x).$$
	
	\textbf{Step 4:} A DC-SOS decomposition of $m(x)$ is given by
	$c_{\alpha}s_1(x) - c_{\alpha}s_2(x).$
	
\end{algorithm}
Note that the degrees of the DC-SOS components generated by \Cref{alg:GP-DCSOS} depend on two important steps: (i) the factorization of $x^{\alpha}$ into elementary cases in Step 2; (ii) the order of multiplications for elementary cases in Step 3. Different factorizations may lead to different degrees, and the multiplication order of elementary cases will influence the degree of the resulting DC-SOS components as well. Next, we will discuss these issues respectively.

\paragraph{\textbf{Factorization of a monomial into elementary cases}}
We start by an example to show how the factorization of $x^{\alpha}$ influences the degree of DC-SOS components. Consider the monomial $x_1x_2$ and two factorizations: (1) we factorize as $(x_1)(x_2)$ and using \Cref{eq:DC-SOS_elementarycase1,eq:DC-SOS_elementarycase3} to get a DC-SOS decomposition as 
{\small\begin{align*}
		x_1x_2\overset{\eqref{eq:DC-SOS_elementarycase1}}{=}& \left(\frac{1}{4}(x_1+1)^2-\frac{1}{4}(x_1-1)^2 \right) \left(\frac{1}{4}(x_2+1)^2-\frac{1}{4}(x_2-1)^2 \right) \\
		\overset{\eqref{eq:DC-SOS_elementarycase3}}{=}&\frac{1}{2}\left[{{\left( \frac{{{\left( {{x}_{2}}+1\right) }^{2}}}{4}+\frac{{{\left( {{x}_{1}}+1\right) }^{2}}}{4}\right) }^{2}}+{{\left( \frac{{{\left( {{x}_{2}}-1\right) }^{2}}}{4}+\frac{{{\left( {{x}_{1}}-1\right) }^{2}}}{4}\right) }^{2}}\right]\\
		&-\frac{1}{2}\left[{{\left( \frac{{{\left( {{x}_{2}}+1\right) }^{2}}}{4}+\frac{{{\left( {{x}_{1}}-1\right) }^{2}}}{4}\right) }^{2}}+{{\left( \frac{{{\left( {{x}_{2}}-1\right) }^{2}}}{4}+\frac{{{\left( {{x}_{1}}+1\right) }^{2}}}{4}\right) }^{2}}\right],
\end{align*}}
where the degree of each DC-SOS component is $4$. (2) we factorize as $(x_1x_2)$ and using \eqref{eq:DC-SOS_elementarycase1} to get a DC-SOS decomposition as 
$$x_1x_2 = \frac{1}{4}(x_1+x_2)^2 - \frac{1}{4}(x_1-x_2)^2,$$ whose degree of each DC-SOS component is $2$ (smaller than 4). Therefore, we have to propose an optimal factorization in order to produce a minimal degree DC-SOS decomposition. The proposed factorization is described in \textbf{Procedure F} which will output a list  $L=[s_1(x),\ldots,
s_{r_L}(x)]$ of DC-SOS decompositions of length $r_L=\lceil \frac{|\alpha|}{2} \rceil$ such that $x^{\alpha} = \prod_{i=1}^{r_L}s_i$ and all $s_i$ are DC-SOS of degree $2$.

\begin{proc}{h!}{Procedure F}
	\caption{Factorization of $x^{\alpha}$}
	\label{proc:F}
	
	\textbf{Step 1:} Let $\mathcal{O}(x^{\alpha})$ be the index set of odd degree variables in $x^{\alpha}$. Then we get a factorization of $x^{\alpha}$ as $x^{\alpha}=o(x)e^2(x)$ where 
	$$o(x)=\begin{cases}
\prod_{i\in \mathcal{O}(x^{\alpha})}x_i, & \text{if }\mathcal{O}(x^{\alpha})\neq\emptyset\\
1, & \text{otherwise}\end{cases} \quad \text{ and }\quad e^2(x)=x^{\alpha}/o(x).$$
	\textbf{Step 2:} Factorize $e^2(x)$ as $(x_i^2)(x_j^2)\cdots(x_k^2)$ and produce DC-SOS decompositions for each $(x_i^2)$ using \cref{eq:DC-SOS_elementarycase2}. \\
	\textbf{Step 3:} Factorize $o(x)$ as $\prod(x_{i} x_{j})$ such that all $i,j$ belong to $\mathcal{O}(x^{\alpha})$ or $x_i=1$ or $x_j=1$, then produce DC-SOS decompositions for each couple $(x_{i} x_{j})$ using \cref{eq:DC-SOS_elementarycase1}.	 
\end{proc}

\paragraph{\textbf{Order of multiplications for elementary cases}}
We start again by an example to show how the order of multiplications influences the degree of DC-SOS components. Consider the monomial $x_1^4x_2^2x_3^2$ with two orders of multiplications: \\(1) We multiply the elementary cases in the order  $\overbrace{(\underbrace{(x_1^4x_2^2)}_{1}\underbrace{(x_3^2)}_{2})}^{3},$ where the labels $1,2$ and $3$ under/over brackets indicate the order of multiplications, which means that we compute respectively a DC-SOS decomposition for $x_1^4x_2^2$ and $x_3^2$, then produce their DC-SOS decompositions. Then, we produce a DC-SOS decomposition of $x_1^4x_2^2$ as 
\[\frac{1}{2}\left[{{\left( {{{{x}_{3}}}^{2}}+\frac{{{\left( {{{{x}_{2}}}^{2}}+{{{{x}_{1}}}^{4}}\right) }^{2}}}{2}\right) }^{2}}+\frac{{{\left( {{{{x}_{2}}}^{4}}+{{{{x}_{1}}}^{8}}\right) }^{2}}}{4}\right]-\frac{1}{2}\left[{{\left( {{{{x}_{3}}}^{2}}+\frac{{{{{x}_{2}}}^{4}}+{{{{x}_{1}}}^{8}}}{2}\right) }^{2}}+\frac{{{\left( {{{{x}_{2}}}^{2}}+{{{{x}_{1}}}^{4}}\right) }^{4}}}{4}\right]\]
with the degree $16$ for each DC-SOS component. \\(2) We multiply by the order $\overbrace{(\underbrace{(x_2^2x_3^2)}_{1}\underbrace{(x_1^4)}_{2})}^{3}$ to get a DC-SOS decomposition as
\[\frac{1}{2}\left[{{\left( \frac{{{\left( {{{{x}_{3}}}^{2}}+{{{{x}_{2}}}^{2}}\right) }^{2}}}{2}+{{{{x}_{1}}}^{4}}\right) }^{2}}+\frac{{{\left( {{{{x}_{3}}}^{4}}+{{{{x}_{2}}}^{4}}\right) }^{2}}}{4}\right] - \frac{1}{2}\left[{{\left( \frac{{{{{x}_{3}}}^{4}}+{{{{x}_{2}}}^{4}}}{2}+{{{{x}_{1}}}^{4}}\right) }^{2}}+\frac{{{\left( {{{{x}_{3}}}^{2}}+{{{{x}_{2}}}^{2}}\right) }^{4}}}{4}\right]\]
with the degree $8$ for each DC-SOS component.  

Clearly, a principal to get a small degree DC-SOS decomposition is to multiply couples with the same degree, e.g., the couple $(x_2^2x_3^2)$ is preferred to the couple $(x_1^4x_2^2)$. Hence, \textbf{Procedure M} is proposed to multiply the elements in the list $L=[s_1(x),\ldots,
s_{r_L}(x)]$ generated by \textbf{Procedure F}.
\begin{proc}{h!}{Procedure M}
	\caption{Order of Multiplications}
	\label{proc:M}
	\textbf{Step 1:} Sort the list $L$ in an increasing order by the degree of DC-SOS components.
	
	\textbf{Step 2:} \\
	\eIf{$|L|=1$}
	{\Return $L(1)$}
	{
		Compute a DC-SOS decomposition for $L(1)\times L(2)$ by \Cref{eq:DC-SOS_elementarycase3} to replace $L(1)$ and $L(2)$ as:	$L\longleftarrow (L \setminus [L(1),L(2)])\cup (L(1)\times L(2))$.
		
		\textbf{Goto Step 1}.
	}
\end{proc}

\subsection{Improved Parity DC-SOS decomposition}
By introducing \textbf{Procedure F} and \textbf{Procedure M} into \Cref{alg:GP-DCSOS}, we get an \emph{Improved Parity DC-SOS Decomposition Algorithm} (cf. IP-DCSOS) described in \Cref{alg:IP-DCSOS}. 
\setcounter{algorithm}{1}
\begin{algorithm}[h!]
	\caption{Improved Parity DC-SOS Decomposition (IP-DCSOS)}
	\label{alg:IP-DCSOS}
	\KwIn{Monomial $m(x)$.} 
	\KwOut{DC-SOS decomposition.}
	\BlankLine
	\textbf{Step 1:} Extract $x^{\alpha}$ and $c_{\alpha}$ from $m(x)$.
	
	\textbf{Step 2:} Use \textbf{Procedure F} to factorize $x^{\alpha}$ and output a list $L=[s_1(x),\ldots,s_{r_L}(x)].$
	
	\textbf{Step 3:} Use \textbf{Procedure M} to compute a DC-SOS decomposition of $x^{\alpha}$ as
	$p(x) - q(x).$
	
	\textbf{Step 4:} A DC-SOS decomposition for $m(x)$ is given by
	$c_{\alpha}p(x) - c_{\alpha}q(x).$
\end{algorithm}
\begin{proposition}\label{prop:degreeofIP-DCSOS}
	Let $m$ be a monomial of $\R[x]$. Then the degree of the DC-SOS components generated by \Cref{alg:IP-DCSOS} is equal to \begin{equation}
		\label{eq:degIP-DCSOS}
		\left\lbrace \begin{array}{ll}
			0, &\text{ if } \deg(m)=0,\\
			2, &\text{ if } \deg(m)=1,\\
			2^{\lceil \log_2(\deg(m))\rceil}, & \text{ if } \deg(m)\geq 2.  
		\end{array}\right.
	\end{equation}
\end{proposition}
\begin{proof}
	The result is obviously true for the case where $\deg(m)\in \{0,1\}$. Now, consider the case with $\deg(m)\geq 2$, we get a list of $\lceil \frac{\deg(m)}{2} \rceil$ DC-SOS polynomials of degree $2$ by \textbf{Procedure F} in Step 2. Then the loop in \textbf{Procedure M} (Step 3) will first reduce to $\lceil \frac{\deg(m)}{2^2} \rceil$ DC-SOS polynomials of degree $2^2$, and reduce again to $\lceil \frac{\deg(m)}{2^3} \rceil$ DC-SOS polynomials of degree $2^3$ etc, until there is only one element in $L$ of degree $2^k$, i.e., $\lceil \frac{\deg(m)}{2^{k}} \rceil=1$. It follows that $k=\lceil \log_2(\deg(m))\rceil$ and the degree of DC-SOS components equals $2^{\lceil \log_2(\deg(m))\rceil}$.
\end{proof}

Note that the degree of DC-SOS components in \Cref{prop:degreeofIP-DCSOS} is not minimized since $2^{\lceil \log_2(\deg(m))\rceil} > 2\lceil \frac{\deg(m)}{2}\rceil$ when $\deg(m)\notin 2^{\N}$. The degree's gap is limited of order $O(\deg(m))$. The reason of this defect lies in the use of \cref{eq:prodDC-SOS} whose DC-SOS components are not of minimal degree. In the next subsection, we propose a minimal degree DC-SOS decomposition algorithm.

\subsection{Minimal degree DC-SOS decomposition}
\begin{theorem}\label{thm:dcsosformulation}
	Let $m\in \N^*, n\in \N^*$, $x\in \mathbb{R}^n, \alpha\in \mathbb{N}^n$, $\calN=\IntEnt{1,n}$ and  
	$\binom{m}{\alpha}$ the multinomial coefficient defined by
	$$\binom{m}{\alpha} = \frac{m!}{\alpha_1!\alpha_2!\cdots\alpha_n!}$$
	under the convention that $\binom{m}{\alpha}=0$ if $\alpha\in (\mathbb{N^*})^n$ and $|\alpha|=m<n$. Then   
	\begin{equation}\label{eq:factorization}
		\sum_{\calA \subseteq \calN, \calA\neq \emptyset} (-1)^{\left| \calA \right|} \left(\sum_{j \in \calA} x_j\right)^m = (-1)^n \sum_{|\alpha|=m,\alpha\in (\mathbb{N^*})^n}\binom{m}{\alpha}x^{\alpha}.	
	\end{equation}
\end{theorem}
\begin{proof}
	Let $f(x)$ be the left part of \Cref{eq:factorization} which is clearly a homogeneous polynomial of degree $m$. Firstly, we prove that $f(x)$ is a multiple of $\prod_{i\in \calN}x_i$: If $x_1=0$ then $f(x)=0$, because in each term $(-1)^{|\calA|}\left(\sum_{j \in \calA} x_j\right)^m$, if $1\in \calA$, then $x_1=0$ will turn the term to $(-1)^{|\calA|}\left(\sum_{j \in \calA\setminus\{1\}} x_j\right)^m$ which cancels out the term $(-1)^{|\calA\setminus\{1\}|}\left(\sum_{j \in \calA\setminus\{1\}} x_j\right)^m$ in $f(x)$ with opposite sign; otherwise, we can add $x_1$ to get $(-1)^{|\calA|}\left(\sum_{j \in \calA\cup\{1\}} x_j\right)^m$ which cancels out the term $(-1)^{|\calA\cup\{1\}|}\left(\sum_{j \in \calA\cup\{1\}} x_j\right)^m$ in $f(x)$ with opposite sign. So $x_1$ is a divisor of $f(x)$ and by the same argument for $x_2, \dotsc, x_n$. Hence $f(x)$ is a multiple of $\prod_{i\in \calN}x_i$. Next, we have three cases:\\ 
	(i) If $m<n$, then $\deg(f(x))=m<\deg(\prod_{i\in \calN}x_i)=n$. An $m$ degree polynomial $f(x)$ is a multiple of an $n$ degree polynomial $\prod_{i\in \calN}x_i$ with $m<n$ implies that $f(x) = 0.$\\
	(ii) If $m=n$, then $\deg(f(x))=\deg(\prod_{i\in \calN}x_i)=n$. In this case, the polynomial $f(x)$ as a multiple of $\prod_{i\in \calN}x_i$ must be of the form $c\times \prod_{i\in \calN}x_i$ with $c\in \Z$. This term can be only provided by the term $(-1)^{|\calN|}(\sum_{j\in \calN}x_j)^m$ whence $c=(-1)^n n!$ and 
		\begin{equation}
			f(x) = (-1)^n n! \prod_{i\in \calN}x_i.
		\end{equation}
	(iii) If $m>n$, then $f(x)$ must be of the form $\sum_{|\alpha|=m,\alpha\in (\mathbb{N}^*)^n}c_{\alpha} x^{\alpha}$ since $f(x)$ is a homogeneous polynomial of degree $m$ and a multiple of $\prod_{i\in \calN}x_i$. Thus each $x^{\alpha}$ is a multiple of $\prod_{i\in \calN}x_i$ which can be only provided by the term $(-1)^{|\calN|}(\sum_{j\in \calN}x_j)^m$. Expanding this by multinomial theorem, we get
		$$(-1)^{|\calN|}(\sum_{j\in \calN}x_j)^m = (-1)^{n}\sum_{|\alpha|=m,\alpha\in (\mathbb{N}^*)^n} \binom{m}{\alpha} x^{\alpha},$$
		i.e., $c_{\alpha} = (-1)^n \binom{m}{\alpha}$. Thus 
		$$f(x) = (-1)^n \sum_{|\alpha|=m,\alpha\in (\mathbb{N^*})^n}\binom{m}{\alpha}x^{\alpha}.$$
	We conclude from (i), (ii) and (iii) the same identity \cref{eq:factorization} for all $m$ and $n$ in $\N^*$.
\end{proof}

\begin{corollary}\label{cor:dcsosforprodpi}
	Let $n\in \N^*$, $\calN = \IntEnt{1,n}$, and $p_i,i\in \calN$ be $n$ CSOS polynomials. Then we have a DC-SOS decomposition for $\prod_{i\in \calN} p_i$ as: 
	\begin{equation}\label{eq:DC-SOSdecompwithsamedegree}
		\prod_{i\in \calN}p_i = \frac{1}{n!} \sum_{\calA \subseteq \calN, \calA\neq \emptyset} (-1)^{\left| \calA\right|+n} \left(\sum_{j \in \calA} p_j\right)^n.
	\end{equation}
	Moreover, if $\deg(p_i)$ are all equal for all $i \in \mathcal{N}$, then 
	\begin{itemize}[leftmargin=12pt]
		\item for any nonempty subset $\calA$ of $\calN$, we have $$\displaystyle\deg\left(\left(\sum_{j \in \calA} p_j\right)^n\right)=\displaystyle\deg\left(\prod_{i\in\calN} p_i\right).$$
		\item \cref{eq:DC-SOSdecompwithsamedegree} provides a minimal degree DC-SOS decomposition for $\prod_{i\in \calN} p_i$.
	\end{itemize}
\end{corollary}
\begin{proof}
	We replace in identity \cref{eq:factorization} all $x_i$ by $p_i$ with $m=n$ to get 
	$$
	\prod_{i=1}^{n}p_i = \frac{1}{n!} \sum_{\calA \subseteq \calN} (-1)^{\left| \calA\right|+n} \left(\sum_{j \in \calA} p_j\right)^n.
	$$
	Clearly, each term $\left(\sum_{j \in \calA} p_j\right)^n$ is CSOS, then the right part of formulation \cref{eq:DC-SOSdecompwithsamedegree} is DC-SOS. Moreover, if $\deg(p_i), i\in \calN$ are all equal, then for all $\calA\subset \calN, \calA \neq\emptyset$, we get $$\deg\left(\left(\sum_{j \in \calA} p_j\right)^n\right) =\deg\left(\prod_{i\in \calN} p_i\right) = n\deg(p_i), \forall i\in \calN,$$
	which implies that \cref{eq:DC-SOSdecompwithsamedegree} provides a minimal degree DC-SOS decomposition for $\prod_{i\in \calN} p_i$. 
\end{proof}

Based on \Cref{cor:dcsosforprodpi}, we propose a \emph{Minimal Degree DC-SOS Decomposition Algorithm} (cf. MD-DCSOS) described in \Cref{alg:MD-DCSOS}.
\begin{algorithm}[h!]
	\caption{Minimal Degree DC-SOS Decomposition (MD-DCSOS)}
	\label{alg:MD-DCSOS}
	\KwIn{Monomial $m(x)$.} 
	\KwOut{DC-SOS decomposition.}
	\BlankLine
	\textbf{Step 1:} Extract $x^{\alpha}$ and $c_{\alpha}$ for $m(x)$.
	
	\textbf{Step 2:} Use \textbf{Procedure F} to factorize $x^{\alpha}$ and produce a list of $r$ DC-SOS decompositions $L=[g_1(x)-h_1(x),\ldots,g_{r}(x)-h_{r}(x)].$
	
	\textbf{Step 3:} For each $\calA\subseteq \calR:=\IntEnt{1,r}$, use \cref{eq:DC-SOSdecompwithsamedegree} to get a DC-SOS decomposition:    
	$$\prod_{i\in \calR\setminus \calA } g_i \prod_{j\in \calA } h_j = \underbrace{\bar{g}_{\calA}(x)}_{\CSOS} - \underbrace{\bar{h}_{\calA}(x)}_{\CSOS}.$$
	
	\textbf{Step 4:} A DC-SOS decomposition for $m(x)$ is given by
	$$c_{\alpha}\sum_{\calA\subseteq \calR} (-1)^{|\calA|} (\bar{g}_{\calA}(x) - \bar{h}_{\calA}(x)).$$
	
\end{algorithm}
\begin{proposition}\label{prop:degreeofMD-DCSOS}
	Let $m$ be a monomial of $\R[x]$. Then the degree of DC-SOS components generated by \Cref{alg:MD-DCSOS} equals $2\lceil\frac{\deg(m)}{2}\rceil$.
\end{proposition}

\begin{proof}
	\textbf{Procedure F} in Step 2 factorizes $m$ and produces a list of $r$ ($=\lceil\frac{\deg(m)}{2}\rceil$) DC-SOS polynomials $[g_1-h_1,\ldots,g_r-h_r]$ with all $g_i$ and $h_i$ being quadratic convex polynomials or zeros. Let $\calR=\IntEnt{1,r}$. Then 
	$$m(x)=\prod_{i=1}^{r}g_i(x)-h_i(x) = \sum_{\calA \subseteq \calR} (-1)^{|\calA|} \prod_{i\in \calR\setminus \calA } g_i \prod_{j\in \calA } h_j.$$
	Using formulation \cref{eq:DC-SOSdecompwithsamedegree}, we get a DC-SOS decomposition for each $\prod_{i\in \calR\setminus \calA } g_i \prod_{j\in A } h_j$ as $\bar{g}_{\calA}-\bar{h}_{\calA}$ with DC-SOS components of degree $2r$ if $h_j\neq 0, \forall j\in \calA$, or $0$ if $\exists j\in \calA, h_j=0$. Hence, the degree of DC-SOS components for $m$ is equal to $2r = 2\lceil\frac{\deg(m)}{2}\rceil$.
\end{proof}

\subsection{Comparisons of DC-SOS decomposition algorithms}
Since all methods are dealing with monomials only, and a polynomial can be computed by linear combination of DC-SOS decompositions of monomials. For this reason, all proposed DC-SOS decompositions can be executed in parallel.

Firstly, the maximal degrees of DC-SOS components are summarized in \Cref{tab:compareDC-SOSalgos} where (MD-DCSOS) provides minimal degree DC-SOS decompositions, whereas the degree of DC-SOS decomposition generated by (GP-DCSOS) depends on the factorization of $x^{\alpha}$ and the order of multiplications. 
\begin{table}[H]
	\begin{center}		
		\caption{Maximal degree of DC-SOS components for DC-SOS decompositions}\label{tab:compareDC-SOSalgos}
		\begin{tabular}{|l|c|}  
			\hline  
			DC-SOS Algorithms & Maximal degree of DC-SOS components \\ 
			\hline
			\Cref{alg:GP-DCSOS} (GP-DCSOS) & -- \\ 
			\Cref{alg:IP-DCSOS} (IP-DCSOS) & \eqref{eq:degIP-DCSOS}\\
			\Cref{alg:MD-DCSOS} (MD-DCSOS) & $2\lceil \frac{\deg(m)}{2}\rceil$ \\
			\hline
		\end{tabular}
	\end{center}
\end{table}

Secondly, let $p$ be a polynomial with $J$ monomials $m_1,\ldots,m_J$. Then the number of squares in DC-SOS decompositions are:
\begin{itemize}[leftmargin=12pt]
	\item In (GP-DCSOS) and (IP-DCSOS): For each monomial, \cref{eq:DC-SOS_elementarycase3} is used to produce DC-SOS decompositions of the multiplications of DC-SOS polynomials, which yields $4$ squares (namely, outer squares) in the final decomposition. Then for polynomial $p$ with $J$ monomials, there are $4J$ outer squares. Note that each outer square should be also a CSOS polynomial (namely, inner squares), and the number of inner squares depends on the number of couples in the factorization of $x^{\alpha}$. 
	\item In (MD-DCSOS): For each monomial $m_i, i\in \IntEnt{1,J}$, \textbf{Procedure F} factorizes $m_i$ to a list of $r_i=\lceil\frac{\deg(m_i)}{2} \rceil$ DC-SOS polynomials in Step 2, Then, for $\calR=\IntEnt{1,r_i}$, there are $2^{r_i} = 2^{\lceil \frac{\deg(m_i)}{2}\rceil}$ subsets $\calA$, and based on \cref{eq:DC-SOSdecompwithsamedegree}, there are at most $2^{r_i}$ squares for each $\calA$ which yields $4^{r_i}$ squares for $m_i$. Then, for polynomial $p$, we have at most $\sum_{i=1}^{J}4^{\lceil\frac{\deg(m_i)}{2}\rceil}$ squares.
\end{itemize}

Lastly, the complexity for DC-SOS decompositions depends as well on the number of squares and the complexity for computing these squares.  (IP-DCSOS) could be faster than (MD-DCSOS) due to the less number of squares, but the degree of the DC-SOS components are not minimized. In practice, if the degree of the DC-SOS components is not important or $\deg(m_i)\in 2^{\N}$, then (IP-DCSOS) should be the best choice in terms of less number of squares; otherwise, if a minimal degree decomposition is required, then (MD-DCSOS) is recommended. 

\section{Numerical simulations}\label{sec:simulations}
In this section, we report some numerical results on the real performance of the D-SOS and DC-SOS decomposition algorithms. 
Our codes are implemented in MATLAB (R2020a), namely $\POLYDSOS$ and \POLYDCSOS, available at \url{https://github.com/niuyishuai/POLY2DSOS_DCSOS}, and tested on a cluster equipped with CentOS Linux 7 64-bit, $60$ CPUs (Intel Xeon Gold 6148 CPU @ 2.40GHz) and 256GB of RAM.
\paragraph{\textbf{Polynomial modeling}} For efficiently modeling multivariate polynomials on MATLAB, we use POLYLAB  \cite{Polylab} (a multivariate polynomial modeling toolbox, code available at \url{https://github.com/niuyishuai/Polylab}). This tool is chosen for its high efficiency on dense and large-scale polynomial generations and operations. It is $100$ times faster than the MATLAB symbolic toolbox, and $5$ times faster than YALMIP \cite{Yalmip} on polynomial constructions.

\paragraph{\textbf{Test dataset}}
For each $n\in \{2,5,8,11,14,17,20\}$, $d\in \{2,3,4,5,6\}$ and polynomial density $\density\in \{0.2,0.4,0.6,0.8,1\}$, we generate $10$ polynomials whose coefficients are integers chosen in the interval $[-10,10]$ (i.e., $1750$ polynomials in total). These settings cover most real-world polynomial optimization problems. 

\subsection{Performance of D-SOS and DC-SOS decompositions}\label{subsec:testsonrandom}
In this subsection, we compare numerical performance among four algorithms: two D-SOS decomposition algorithms (DBS-DSOS) and (MBS-DSOS), and two DC-SOS decomposition algorithms (IP-DCSOS) and (MD-DCSOS).
The general versions (GS-DSOS) and (GP-DCSOS) are not tested due to their flexibilities. We are interested in the impact to the performance from three aspects: the number of variables $n$, the polynomial degrees $d$ and the polynomial density $\density$, versus the natural logarithm of total decomposition time. The maximal degrees of components are also compared.

\paragraph{\textbf{Comments on the numerical results}}
\Cref{fig:perfDSOSDCSOS} demonstrates the performance of the compared D-SOS and DC-SOS decomposition algorithms.
\begin{figure}[htb!]
	\centering
	\subfigure[number of variables v.s. log time]{
		\label{subfig:dsos_dcsos_nvstime} 
		\includegraphics[width=0.46\textwidth]{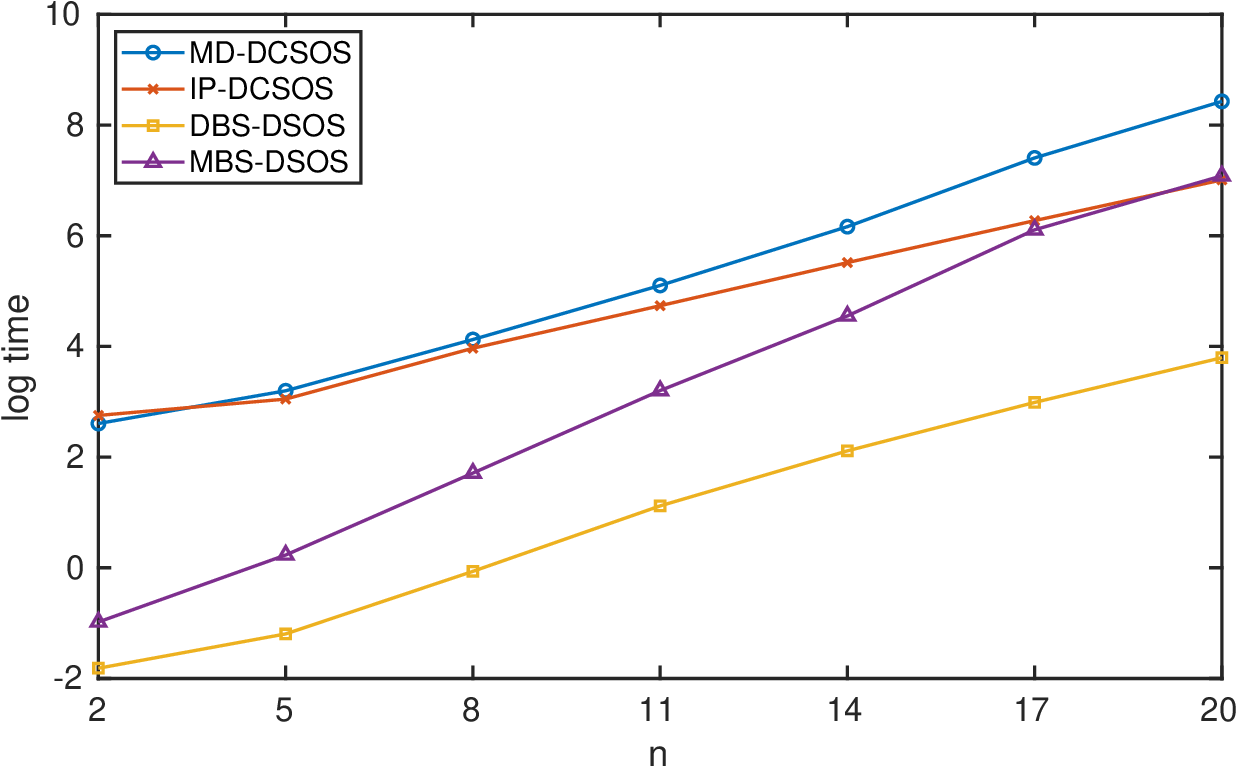}
	}
	\subfigure[polynomial degree v.s. log time]{ 
		\label{subfig:dsos_dcsos_dvstime} 
		\includegraphics[width=0.46\textwidth]{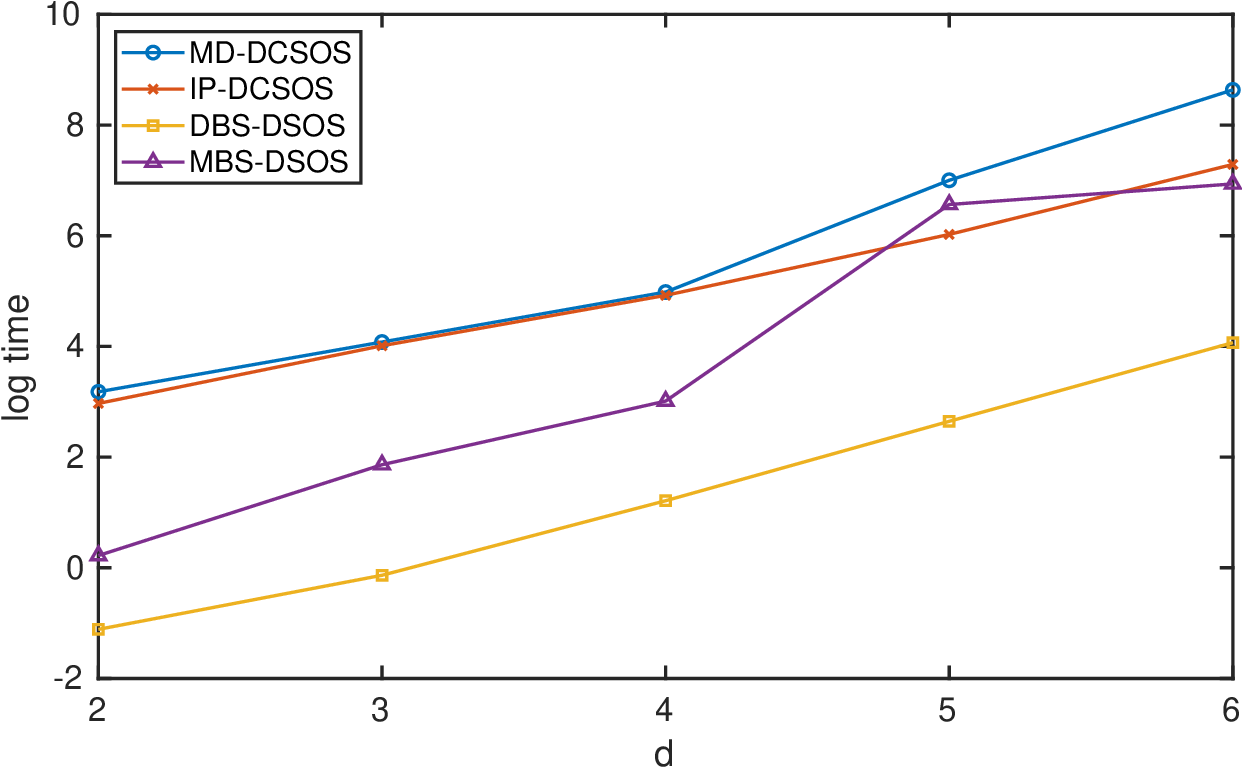}
	}
	\subfigure[polynomial density v.s. log time]{ 
		\label{subfig:dsos_dcsos_denvstime} 
		\includegraphics[width=0.46\textwidth]{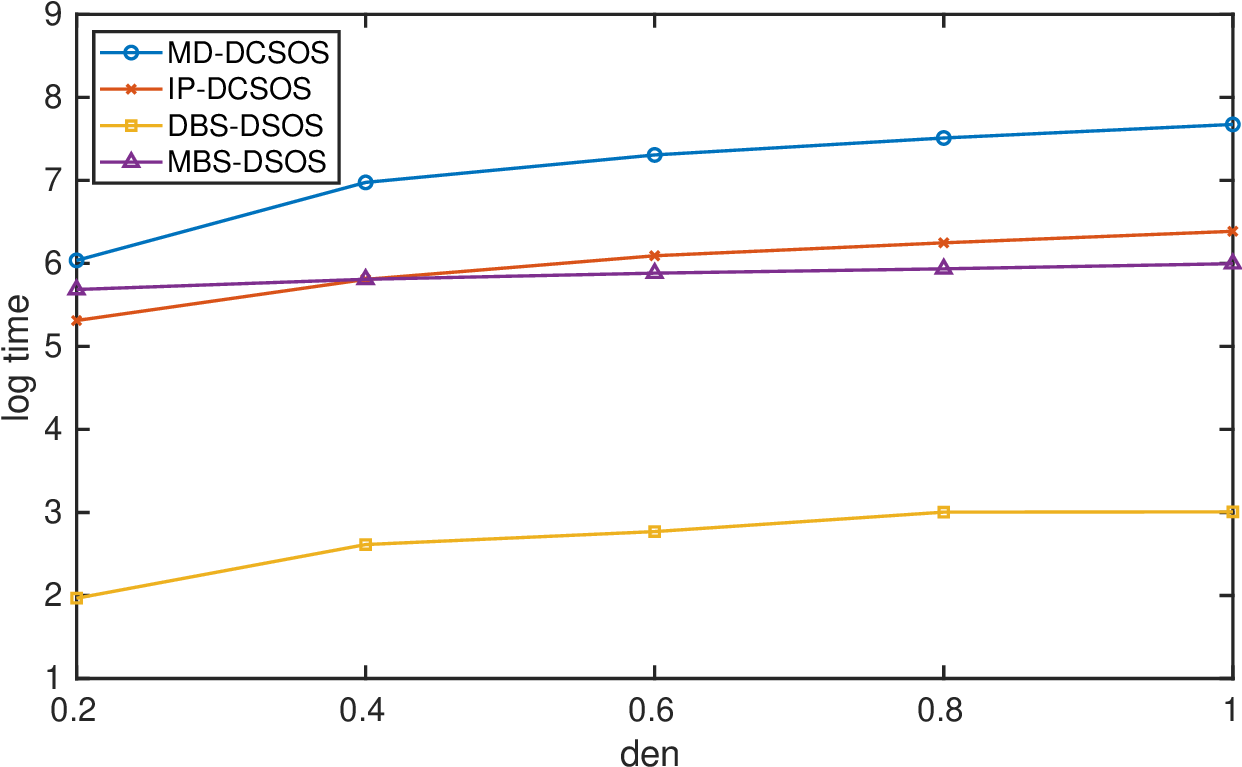} 
	}
	\subfigure[maximal degree of components]{ 
		\label{subfig:dsos_dcsos_maxdeg} 
		\includegraphics[width=0.46\textwidth]{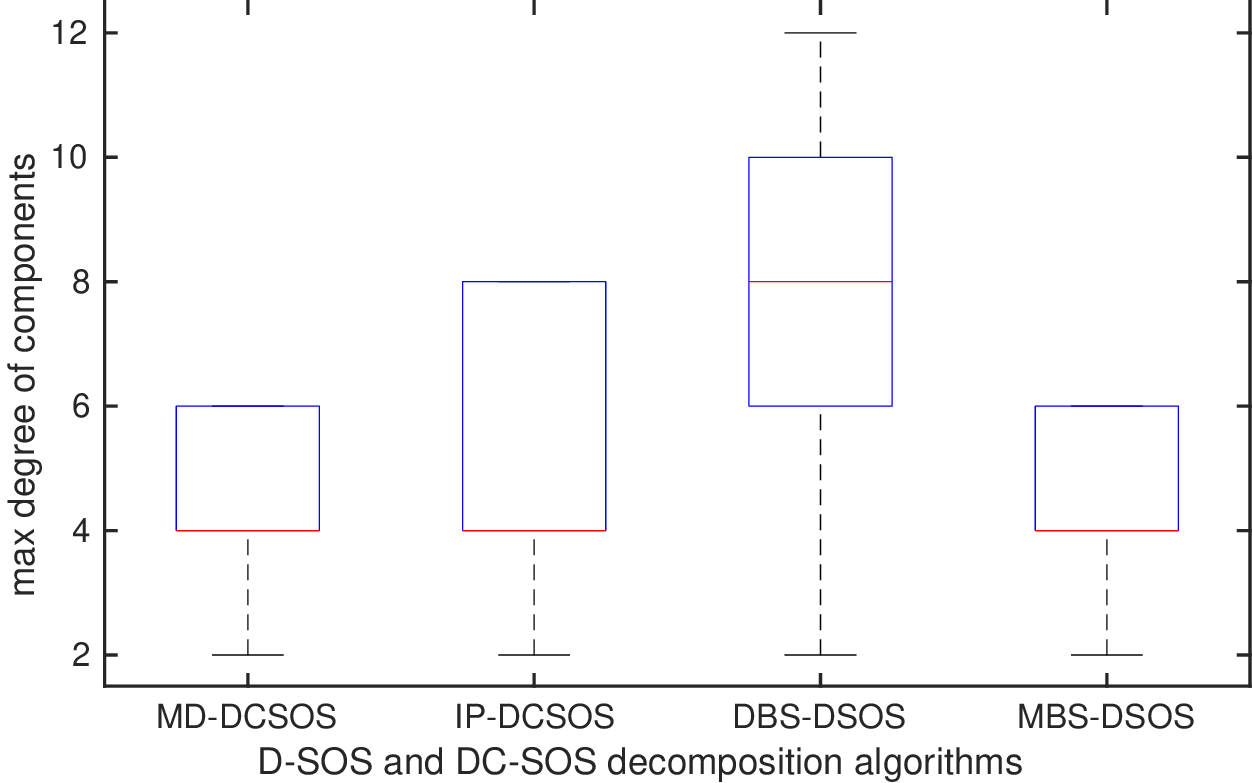} 
	}
	\caption{Numerical performance of D-SOS and DC-SOS decomposition algorithms}
	\label{fig:perfDSOSDCSOS}
\end{figure}
\begin{itemize}[leftmargin=12pt]
	\item Concerning the decomposition time: the fastest method is (DBS-DSOS) and the slowest one is (MD-DCSOS). In general, the D-SOS decompositions are faster than the DC-SOS decompositions. For all methods, the log decomposition time grows almost linearly over $n$ (\Cref{subfig:dsos_dcsos_nvstime}) and $d$ (\Cref{subfig:dsos_dcsos_dvstime}), grows slightly concavely over the polynomial density (\Cref{subfig:dsos_dcsos_denvstime}).
	\item Concerning the maximal degree of components: (MBS-DSOS) and (MD-DCSOS) provided minimal degree D-SOS and DC-SOS components as proved in \Cref{prop:degreeofMD-DCSOS,prop:degreeofMBS-DSOS}; The maximal degree generated by (IP-DCSOS) is of form $2^{\N}$ as proved in \Cref{prop:degreeofMD-DCSOS}, so that in some tested cases, the maximal degree is higher than (MD-DCSOS). However, the average maximal degree for (IP-DCSOS) on the test dataset is the same to (IP-DCSOS) and (MBS-DSOS), whereas, (DBS-DSOS) leads to an average degree twice greater than the minimal degree.	
\end{itemize}

\subsection{Performance of the parallel DC-SOS decompositions} DC-SOS decompositions can be parallelized based on the decomposition of each monomial independently. In this subsection, we report numerical performance of parallel DC-SOS decomposition algorithms for (IP-DCSOS) and (MD-DCSOS) implemented using MATLAB \texttt{parfor}. The performance of the parallelism is measured by the Speedup Ratio \cite{Eager1989} defined by $S(N) = T_1/T_N$
with $T_1$ and $T_N$ being the wall-clock time for sequential code with one processor and for parallel code with $N$ processors. 

\begin{figure}[h!]
	\centering
	\subfigure[Speedup ratio of (IP-DCSOS)]{
		\label{subfig:dcsos_ip_speedup} 
		\includegraphics[width=0.46\textwidth]{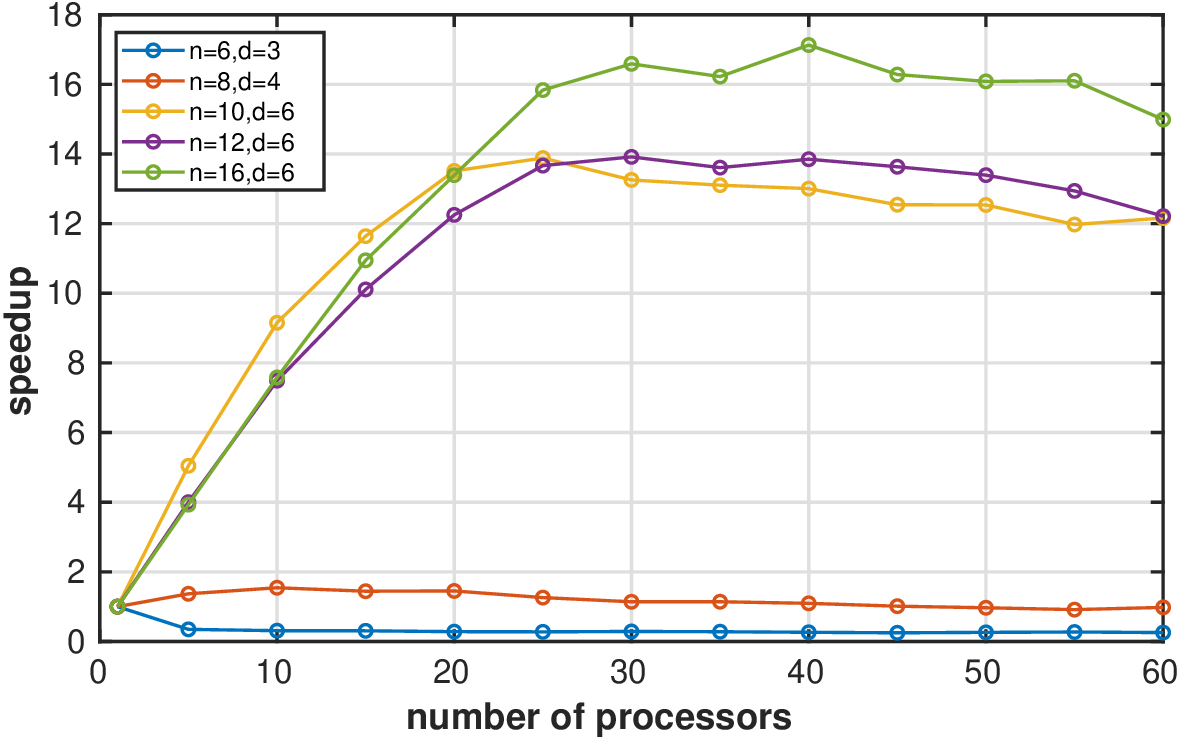}
	}
	\subfigure[Speedup ratio of (MD-DCSOS)]{
		\label{subfig:dcsos_md_speedup} 
		\includegraphics[width=0.46\textwidth]{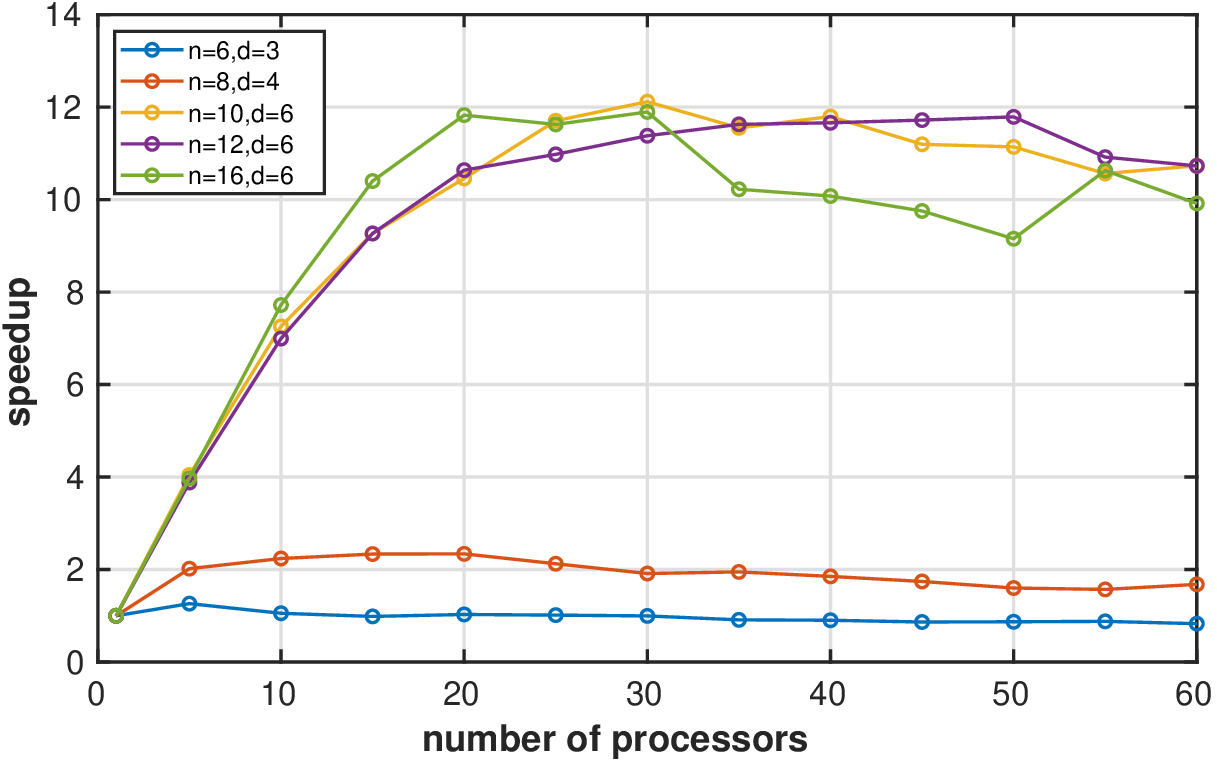}
	}
	\caption{Speedup ratios of the parallel (IP-DCSOS) and the parallel (MD-DCSOS) for full-basis polynomials using up to $60$ processors}
	\label{fig:parallel_perf_dcsos}
\end{figure}

\paragraph{\textbf{Comments on the performance of the parallelism}}
\Cref{fig:parallel_perf_dcsos} summarized the real performance of two parallel DC-SOS decomposition algorithms for dense ($\density=1$) polynomials using up to $60$ processors. Specifically, for each setting $(n,d)$, we test the polynomial of full-basis, i.e., the polynomial $\sum_{|\alpha|=d} x^{
\alpha}$ with $x\in \R^n$. We can observe that:
\begin{itemize}[leftmargin=12pt]
	\item More speedup appears for large-scale polynomials than for small-scale cases as shown in \Cref{subfig:dcsos_ip_speedup,subfig:dcsos_md_speedup}. Hence, the speedup seems to be more prominent for large-scale polynomials. 
	\item The speedup typically increases up to a maximum and then decreases with the increase of the number of processors. E.g., for polynomials with $(n,d)=(16,6)$, we obtained about $18$ fold maximal speedup for (IP-DCSOS) using about $40$ processors, and $12$ fold maximal speedup for (MD-DCSOS) using about $20$ processors. Particularly, for large-scale polynomials with large number of processors, e.g., (MD-DCSOS) with $(n,d)=(16,6)$ using more than $30$ processors, the speedup becomes unstable, which decreases from $n=30$ to $n=50$ and increases from $n=50$ to $n=55$. This observation is probably caused by the intensive communication and synchronization costs among the machines in the cluster. 
\end{itemize}

It worth noting that the DC-SOS decompositions will be very useful to reformulate any polynomial optimization problem as a DC problem. There are several successful real-world applications using the DC-SOS decomposition techniques presented in our paper such as the quadratic eigenvalue complementarity problems with DCA \cite{niu2019improved} and the Mean-Variance-Skewness-Kurtosis high-order moment portfolio optimization problem with Boosted-DCA  \cite{Paper_Niu2019higher}. Readers interested in more details about how to apply DC-SOS in real-world applications and its numerical performance are referred to these articles. 

\section{Conclusions and Perspectives}\label{sec:conclusion}
In this paper, we introduced the set of D-SOS and DC-SOS polynomials, and proved that any polynomial can be reformulated as D-SOS and DC-SOS with any desired precision in polynomial time based on SDP. Some algebraic properties on these sets of polynomials are investigated. We proposed several exact D-SOS and DC-SOS decomposition algorithms, as well as their parallel versions dealing with monomials. Numerical tests demonstrated the real performance of D-SOS and DC-SOS algorithms tested on randomly generated polynomials. We hope that our decomposition techniques will provide a useful tool for solving real-world large-scale polynomial optimization problems via DC programming approaches. 

Concerning future work, several problems deserve more attention: (1) DC-SOS decomposition with minimal number of squares. Note that if a DC-SOS decomposition consists of a large number of squares, solving the corresponding convex subproblems in DCA-based methods will be inefficient. (2) Undominated DC-SOS decomposition. It is still unclear how to generate an undominated DC-SOS decomposition (except for the quadratic form) for general polynomials. The undominated DC-SOS decomposition corresponds to the best convex overestimation for polynomial functions, and will lead to fewer iterations in DCA. (3) Power-sum DC-SOS decomposition, i.e., DC-SOS decomposition with a valid basis in the form of powers of linear forms. We observed that such basis exists for all polynomials, which may lead to DC-SOS decompositions with a sparse structure and a small number of squares. (4) Combination of DC-SOS decomposition with DCA and Lasserre's SDP relaxation. The Lasserre's hierarchy \cite{Paper_Lasserre2001,Paper_Lasserre2008,Paper_Lasserre2009} is a well-known elegant tool for polynomial optimization. However, it may require a high order SDP relaxation for a good approximation which is computationally intractable. Hence, the use of a low-order SDP relaxation to start DCA with a DC-SOS decomposition could give a better local solution in a shorter computation time. (5) Solving polynomial optimization involving D-SOS decompositions, namely \emph{D-SOS program}, seems to be novel. The spectral D-SOS decomposition was proved to be undominated in this paper and could be constructed more effectively than the DC-SOS decompositions. Although a sparse DC constrained DC formulation of the D-SOS program exists, as introduced in \Cref{sec:intro}, the question of how to solve the D-SOS program directly, by leveraging the SOS structure without relying on DC and convexity, merits more attention from the broader optimization community.

\section*{Acknowledgments}
The first author is supported by the Natural Science Foundation of China (Grant No: 11601327), the Key Construction National ``$985$" Program of China (Grant No: WF220426001), and the Research Grants Council (Grant No: 15304019). Special thanks to the editors and anonymous reviewers for their meticulous review of our manuscript. Their useful comments and suggestions helped us to significantly improve the quality of the paper. Additionally, we express our profound gratitude to Defeng Sun and Xun Li for their generous financial support, which has been instrumental in advancing the first author's research endeavors.
\bibliographystyle{siamplain}
\bibliography{references}

\begin{thebibliography}{10}

\bibitem{Paper_Ahmadi2017}
{\sc A.~A. Ahmadi and G.~Hall}, {\em Dc decomposition of nonconvex polynomials
  with algebraic techniques}, Math. Program., Ser. B, 169 (2017), pp.~1--26.

\bibitem{ahmadi2019dsos}
{\sc A.~A. Ahmadi and A.~Majumdar}, {\em Dsos and sdsos optimization: more
  tractable alternatives to sum of squares and semidefinite optimization}, SIAM
  Journal on Applied Algebra and Geometry, 3 (2019), pp.~193--230.

\bibitem{Paper_Ahmadi2013}
{\sc A.~A. Ahmadi, A.~Olshevsky, P.~A. Parrilo, and J.~N. Tsitsiklis}, {\em
  Np-hardness of deciding convexity of quartic polynomials and related
  problems}, Math. Program., 137 (2013), pp.~453--476.

\bibitem{Paper_Ahmadi2011}
{\sc A.~A. Ahmadi and P.~A. Parrilo}, {\em A complete characterization of the
  gap between convexity and sos-convexity}, SIAM J. Optim., 23 (2011),
  pp.~811--833.

\bibitem{Paper_Ahmadi2012}
{\sc A.~A. Ahmadi and P.~A. Parrilo}, {\em A convex polynomial that is not
  sos-convex}, Math. Program., 135 (2012), pp.~275--292.

\bibitem{BDCA_S}
{\sc F.~J.~A. Artacho, R.~M. Fleming, and P.~T. Vuong}, {\em Accelerating the
  dc algorithm for smooth functions}, Mathematical Programming, 169 (2018),
  pp.~95--118.

\bibitem{bakonyi1995euclidian}
{\sc M.~Bakonyi and C.~R. Johnson}, {\em The euclidian distance matrix
  completion problem}, SIAM Journal on Matrix Analysis and Applications, 16
  (1995), pp.~646--654.

\bibitem{DSDP}
{\sc S.~J. Benson, Y.~Ye, and X.~Zhang}, {\em Solving large-scale sparse
  semidefinite programs for combinatorial optimization}, SIAM Journal on
  Optimization, 10 (2000), pp.~443--461.

\bibitem{Book_Bertsekas2009}
{\sc D.~P. Bertsekas}, {\em Convex Optimization Theory}, Athena Scientific,
  1st~ed., 2009.

\bibitem{Biosca2001}
{\sc A.~F. Biosca}, {\em Representation of a polynomial function as a
  difference of convex polynomials, with an application}, in Generalized
  Convexity and Generalized Monotonicity, Springer, 2001, pp.~189--207.

\bibitem{Paper_Blekherman2009}
{\sc G.~Blekherman}, {\em Convex forms that are not sums of squares}, arXiv
  preprint arXiv:0910.0656,  (2009).

\bibitem{CSDP}
{\sc B.~Borchers}, {\em Csdp, a c library for semidefinite programming},
  Optimization Methods \& Software, 11 (1999), pp.~613--623,
  \url{https://projects.coin-or.org/Csdp/}.

\bibitem{boyd2007tutorial}
{\sc S.~Boyd, S.-J. Kim, L.~Vandenberghe, and A.~Hassibi}, {\em A tutorial on
  geometric programming}, Optimization and engineering, 8 (2007), pp.~67--127.

\bibitem{de2019inertial}
{\sc W.~de~Oliveira and M.~P. Tcheou}, {\em An inertial algorithm for dc
  programming}, Set-Valued and Variational Analysis, 27 (2019), pp.~895--919.

\bibitem{Dur2013testing}
{\sc M.~D\"{u}r and J.-B. Hiriart-Urruty}, {\em Testing copositivity with the
  help of difference-of-convex optimization}, Math. Program., 140 (2013),
  pp.~31--43.

\bibitem{Eager1989}
{\sc D.~L. {Eager}, J.~{Zahorjan}, and E.~D. {Lazowska}}, {\em Speedup versus
  efficiency in parallel systems}, IEEE Transactions on Computers, 38 (1989),
  pp.~408--423, \url{https://doi.org/10.1109/12.21127}.

\bibitem{gaudioso2018minimizing}
{\sc M.~Gaudioso, G.~Giallombardo, G.~Miglionico, and A.~M. Bagirov}, {\em
  Minimizing nonsmooth dc functions via successive dc piecewise-affine
  approximations}, Journal of Global Optimization, 71 (2018), pp.~37--55.

\bibitem{Gurobi}
{\sc Gurobi}, {\em Gurobi 9.0}, \url{http://www.gurobi.com/}.

\bibitem{Paper_Hartman1959}
{\sc P.~Hartman}, {\em On functions representable as a difference of convex
  functions}, Pacific Journal of Mathematics, 9 (1959), pp.~707--713.

\bibitem{Paper_Helton2010}
{\sc J.~W. Helton and J.~Nie}, {\em Semidefinite representation of convex
  sets}, Mathematical Programming, 122 (2010), pp.~21--64.

\bibitem{Paper_Hilbert1888}
{\sc D.~Hilbert}, {\em \"{U}ber die darstellung definiter formen als summe von
  formenquadraten}, Math. Ann., 32 (1888), pp.~342--350.

\bibitem{Cplex}
{\sc IBM}, {\em Ibm ilog cplex optimization studio 12.7},
  \url{https://www.ibm.com/us-en/marketplace/ibm-ilog-cplex}.

\bibitem{joki2017proximal}
{\sc K.~Joki, A.~M. Bagirov, N.~Karmitsa, and M.~M. M{\"a}kel{\"a}}, {\em A
  proximal bundle method for nonsmooth dc optimization utilizing nonconvex
  cutting planes}, Journal of Global Optimization, 68 (2017), pp.~501--535.

\bibitem{joki2018double}
{\sc K.~Joki, A.~M. Bagirov, N.~Karmitsa, M.~M. M{\"a}kel{\"a}, and S.~Taheri},
  {\em Double bundle method for finding clarke stationary points in nonsmooth
  dc programming}, SIAM Journal on Optimization, 28 (2018), pp.~1892--1919.

\bibitem{Paper_Kojima2014}
{\sc M.~Kojima}, {\em Sums of squares relaxations of polynomial semidefinite
  programs}, in Mathematical and computing Sciences, Tokyo Institute of
  Technology, Meguro, Tokyo, 2014, pp.~152--8552.

\bibitem{SDPA}
{\sc M.~Kojima, K.~Fujisawa, K.~Nakata, and M.~Yamashita}, {\em Sdpa-m 7.3.9},
  \url{http://sdpa.sourceforge.net/}.

\bibitem{Paper_Lasserre2001}
{\sc J.~B. Lasserre}, {\em Global optimization with polynomials and the problem
  of moments}, Siam J Optim, 11 (2001), pp.~796--817.

\bibitem{Paper_Lasserre2008}
{\sc J.~B. Lasserre}, {\em A semidefinite programming approach to the
  generalized problem of moments.}, Mathematical Programming, 112 (2008),
  pp.~65--92.

\bibitem{Paper_Lasserre2009}
{\sc J.~B. Lasserre}, {\em Moments and sums of squares for polynomial
  optimization and related problems}, Journal of Global Optimization, 45
  (2009), pp.~39--61.

\bibitem{hoai2000efficient}
{\sc H.~A. Le~Thi}, {\em An efficient algorithm for globally minimizing a
  quadratic function under convex quadratic constraints}, Mathematical
  programming, 87 (2000), pp.~401--426.

\bibitem{le2014dc}
{\sc H.~A. Le~Thi, V.~N. Huynh, and D.~T. Pham}, {\em Dc programming and dca
  for general dc programs}, in Advanced Computational Methods for Knowledge
  Engineering: Proceedings of the 2nd International Conference on Computer
  Science, Applied Mathematics and Applications (ICCSAMA 2014), Springer, 2014,
  pp.~15--35.

\bibitem{Paper_Lethi2018}
{\sc H.~A. Le~Thi and D.~T. Pham}, {\em Dc programming and dca: thirty years of
  developments}, Math. Program., Special Issue dedicated to : DC Programming -
  Theory, Algorithms and Applications, 169 (2018), pp.~5--68.

\bibitem{lipp2016variations}
{\sc T.~Lipp and S.~Boyd}, {\em Variations and extension of the convex--concave
  procedure}, Optimization and Engineering, 17 (2016), pp.~263--287.

\bibitem{Yalmip}
{\sc J.~L\"{o}fberg}, {\em Yalmip : a toolbox for modeling and optimization in
  matlab}, Optimization, 2004 (2004), pp.~284 -- 289.

\bibitem{Mosek}
{\sc Mosek}, {\em Mosek modeling cookbook}, \url{https://www.mosek.com/}.

\bibitem{Paper_Murty1987}
{\sc K.~G. Murty and S.~N. Kabadi}, {\em Some np-complete problems in quadratic
  and nonlinear programming}, Mathematical Programming, 39 (1987),
  pp.~117--129.

\bibitem{Polylab}
{\sc Y.-S. Niu}, {\em Polylab -- a matlab multivariate polynomial toolbox}.
\newblock \url{https://github.com/niuyishuai/Polylab}.

\bibitem{Thesis_Niu2010}
{\sc Y.-S. Niu}, {\em Programmation DC \& DCA en Optimisation Combinatoire et
  Optimisation Polynomiale via les Techniques de SDP}, Minist\`{e}re de
  l'enseignement sup\'{e}rieur et de la recherche, Institut National Des
  Sciences Appliqu\'{e}es de Rouen, France, 2010.

\bibitem{niu2019discrete}
{\sc Y.-S. Niu and R.~Glowinski}, {\em Discrete dynamical system approaches for
  boolean polynomial optimization}, Journal of Scientific Computing, 92 (2022),
  pp.~1--39.

\bibitem{Paper_Niu2015}
{\sc Y.-S. Niu, J.~J. J\'{u}dice, H.~A. Le~Thi, and D.~T. Pham}, {\em Solving
  the quadratic eigenvalue complementarity problem by dc programming},
  Modelling, Computation and Optimization in Information Systems and Management
  Sciences, Advances in Intelligent Systems and Computing, 359 (2015),
  pp.~203--214.

\bibitem{niu2019improved}
{\sc Y.-S. Niu, J.~J. J\'{u}dice, H.~A. Le~Thi, and D.~T. Pham}, {\em Improved
  dc programming approaches for solving the quadratic eigenvalue
  complementarity problem}, Applied Mathematics and Computation, 353 (2019),
  pp.~95--113.

\bibitem{Paper_Niu2013}
{\sc Y.-S. Niu, H.~A. Le~Thi, D.~T. Pham, and J.~J. J\'{u}dice}, {\em Efficient
  dc programming approaches for the asymmetric eigenvalue complementarity
  problem}, Optimization Methods and Software, 28 (2013), pp.~812--829.

\bibitem{Paper_Niu2019higher}
{\sc Y.-S. Niu, Y.-J. Wang, H.~A. Le~Thi, and D.~T. Pham}, {\em High-order
  moment portfolio optimization via an accelerated difference-of-convex
  programming approach and sums-of-squares}, arXiv preprint arXiv:1906.01509,
  (2019).

\bibitem{pang2017computing}
{\sc J.-S. Pang, M.~Razaviyayn, and A.~Alvarado}, {\em Computing b-stationary
  points of nonsmooth dc programs}, Mathematics of Operations Research, 42
  (2017), pp.~95--118.

\bibitem{Paper_Pardalos1992}
{\sc P.~M. Pardalos and S.~A. Vavasis}, {\em Open questions in complexity
  theory for numerical optimization}, Springer-Verlag New York, Inc., 1992.

\bibitem{Paper_Parrilo2003}
{\sc P.~A. Parrilo}, {\em Semidefinite programming relaxations for
  semialgebraic problems}, Mathematical Programming, 96 (2003), pp.~293--320.

\bibitem{Paper_Pham1997}
{\sc D.~T. Pham and H.~A. Le~Thi}, {\em Convex analysis approach to d.c.
  programming: theory, algorithms and applications}, Acta Math. Vietnam., 22
  (1997), pp.~289--355.

\bibitem{Paper_Pham1998}
{\sc D.~T. Pham and H.~A. Le~Thi}, {\em Dc optimization algorithms for solving
  the trust region subproblem}, SIAM J. Optim., 8 (1998), pp.~476--507.

\bibitem{Paper_Pham2005}
{\sc D.~T. Pham and H.~A. Le~Thi}, {\em The dc programming and dca revisited
  with dc models of real world nonconvex optimization problems}, Annals of
  Operations Research, 133 (2005), pp.~23--46.

\bibitem{Paper_Niu2011}
{\sc D.~T. Pham and Y.-S. Niu}, {\em An efficient dc programming approach for
  portfolio decision with higher moments}, Computational Optimization \&
  Applications, 50 (2011), pp.~525--554.

\bibitem{nhat2018accelerated}
{\sc D.~N. Phan, H.~M. Le, and H.~A. Le~Thi}, {\em Accelerated difference of
  convex functions algorithm and its application to sparse binary logistic
  regression.}, in IJCAI, 2018, pp.~1369--1375.

\bibitem{SOSTOOLS}
{\sc S.~Prajna, A.~Papachristodoulou, and P.~A. Parrilo}, {\em Sostools: sum of
  squares optimization toolbox for matlab}, 2002–-2005,
  \url{http://www.mit.edu/~parrilo/sostools}.

\bibitem{Paper_Reznick1996}
{\sc B.~Reznick}, {\em Some concrete aspects of hilbert's 17th problem},
  Contemporary Mathematics,  (1996), pp.~251--272.

\bibitem{saunderson2022convex}
{\sc J.~Saunderson}, {\em A convex form that is not a sum of squares},
  Mathematics of Operations Research,  (2022).

\bibitem{shen2016disciplined}
{\sc X.~Shen, S.~Diamond, Y.~Gu, and S.~Boyd}, {\em Disciplined convex-concave
  programming}, in 2016 IEEE 55th Conference on Decision and Control (CDC),
  IEEE, 2016, pp.~1009--1014.

\bibitem{souza2016global}
{\sc J.~C.~O. Souza, P.~R. Oliveira, and A.~Soubeyran}, {\em Global convergence
  of a proximal linearized algorithm for difference of convex functions},
  Optimization Letters, 10 (2016), pp.~1529--1539.

\bibitem{SeDuMi}
{\sc J.~F. Sturm}, {\em Sedumi 1.3}, \url{http://sedumi.ie.lehigh.edu/}.

\bibitem{SDPT3}
{\sc K.~C. Toh, M.~J. Todd, and R.~H. Tutuncu}, {\em Sdpt3 - a matlab software
  package for semidefinite programming}, Optimization Methods \& Software, 11
  (1999), pp.~545--581.

\bibitem{Book_Tuy2016}
{\sc H.~Tuy}, {\em Convex Analysis and Global Optimization}, Springer, 2016.

\bibitem{wen2018proximal}
{\sc B.~Wen, X.~Chen, and T.~K. Pong}, {\em A proximal difference-of-convex
  algorithm with extrapolation}, Computational optimization and applications,
  69 (2018), pp.~297--324.

\bibitem{zou2006sparse}
{\sc H.~Zou, T.~Hastie, and R.~Tibshirani}, {\em Sparse principal component
  analysis}, Journal of computational and graphical statistics, 15 (2006),
  pp.~265--286.

\end{thebibliography}
\end{document}